\newtheorem{theorem}{Theorem}[section]
\newtheorem{lemma}[theorem]{Lemma}
\newtheorem{corollary}[theorem]{Corollary}
\newtheorem{conjecture}[theorem]{Conjecture}
\newtheorem{example}[theorem]{Example}
\newtheorem{definition}[theorem]{Definition}
\renewcommand{\wr}{\mathop{\mathrm{wr}}}
\def\F#1{{\bf F}(#1)}
\def\O#1#2{{{\bf O}}_{{#1}}({{#2}})}
\def\FF#1#2{{\bf F}_{{#1}}(#2)}
\def\cent#1#2{{\bf C}_{#1}(#2)}
\def\R#1{{\bf R}(#1)}
\def\RR#1#2{{\bf R}_{{#1}}(#2)}
\title[Fitting height of factorisable groups]{On the Fitting height of soluble groups admitting a coprime factorisation}
\author[C. Casolo]{Carlo Casolo}
\address{Carlo Casolo, Dipartimento di Matematica ``Ulisse
  Dini'', \newline University of Firenze, Viale Morgagni 67/a, 50134 Firenze Italy}
\email{casolo@math.unifi.it}
\author[E. Jabara]{Enrico Jabara}
\address{Enrico Jabara, Dipartimento di Filosofia e Beni Culturali, \newline
University  C\'a Foscari,  Dorsoduro 3484/D – 30123 Venezia, I-30100 Venezia, Italy. }
\email{jabara@unive.it}
\author[P. Spiga]{Pablo Spiga}
\address{Pablo Spiga, Dipartimento di Matematica Pura e Applicata,\newline
 University of Milano-Bicocca, Via Cozzi 53, 20126 Milano, Italy.}
\email{pablo.spiga@unimib.it}
\thanks{Address correspondence to Pablo Spiga: pablo.spiga@unimib.it.}
\subjclass[2010]{Primary 20D40; Secondary 20D25}
\keywords{Fitting height, derived length, factorisations of groups, products of groups}
\begin{document}

\begin{abstract}
In this paper we are concerned with finite soluble groups $G$ admitting a factorisation $G=AB$, with $A$ and $B$ proper subgroups having coprime order. We are interested in bounding the Fitting height of $G$ in terms of some group-invariants of $A$ and $B$: including the Fitting heights and the derived lengths.
\end{abstract}

\maketitle

\section{Introduction}\label{intro}

In this paper, all groups considered are finite and soluble, and hence the word ``group'' should always be understood as ``finite soluble group''.

We investigate groups $G$ in which a \textit{factorisation} $$G=AB=\{ab\mid a\in A,\,b\in B\}$$
 with $A$ and $B$ subgroups of $G$ of coprime order is given. We are interested in obtaining some upper bounds on the \textit{Fitting height} $h(G)$ of $G$, in terms of the Fitting heights ($h(A)$ and $h(B)$) and of the \textit{derived lengths} ($d(A)$ and $d(B)$) of $A$ and $B$. (Our notation is standard,  see Section~\ref{1.1} for undefined terminology.)

\begin{theorem}\label{thrmA}
Let $G=AB$ be a finite soluble group factorised by its proper subgroups $A$ and $B$ with $\gcd(|A|,|B|)=1$. If $|B|$ is odd, then 
\begin{equation}\label{eq:1}
h(G)\leq h(A)+h(B)+2d(B)-1.
\end{equation} 
If $B$ is nilpotent, then 
\begin{equation}\label{eq:2}
h(G)\leq h(A)+2d(B).
\end{equation}
\end{theorem}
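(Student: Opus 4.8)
The plan is to argue by induction on $|G|$, working inside a minimal counterexample to whichever of \eqref{eq:1}, \eqref{eq:2} fails. First I would run the standard reductions, using the good behaviour of coprime factorisations: if $N\trianglelefteq G$ then $N=(N\cap A)(N\cap B)$ and $G/N=(AN/N)(BN/N)$, the Fitting heights and derived lengths of the factors do not increase, and the ``$B$-factor'' $BN/N\cong B/(B\cap N)$ retains oddness, respectively nilpotency. In particular one may pass to $G/\Phi(G)$ and assume $\Phi(G)=1$; then $F:=\F{G}$ is abelian, equals the socle of $G$, satisfies $\cent{G}{F}=F$, and has a complement in $G$. The crucial bookkeeping identity is $h(G)=h(G/F)+1$, which converts both inequalities into statements about $G/F$ and is the source of the lone ``$-1$'' in \eqref{eq:1}.

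Next I would apply the inductive hypothesis to the smaller coprime factorisation $G/F=(A/F_A)(B/F_B)$, where $F_A=F\cap A$ and $F_B=F\cap B$, and push the bound back up via $h(G)=h(G/F)+1$. The point that makes the numerology close \emph{without loss} is that $A$ and $B$ act faithfully on $F$: since $F$ is abelian and self-centralising, $\cent{A}{F}=F_A$ and $\cent{B}{F}=F_B$. Using a complement of $F$ in $G$ one arranges $A=F_A\rtimes(A/F_A)$ and $B=F_B\rtimes(B/F_B)$, and when the prime $p$ with $\mathbf{O}_p(G)\neq 1$ lies in $\pi(A)$ — so that a chief factor of $F$ sits on the $A$-side — coprimality of this split extension forces $\F{A}=F_A$, hence $h(A/F_A)=h(A)-1$; feeding this into the inductive bound for $G/F$ reproduces \eqref{eq:1}, resp. \eqref{eq:2}, for $G$ with nothing to spare. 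The case $F\leq B$ with $B$ nilpotent reduces to the previous one, because then $F=\mathbf{O}_{\pi(B)}(G)$ and $\F{G/F}$ is a $\pi(A)$-group lying on the $A$-side of $G/F$, so the descent can be iterated.

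The genuinely hard case is $F\leq B$ with $B$ merely of odd order: now $A$ acts faithfully and coprimely on $F$ \emph{from outside}, $\F{B}$ need not coincide with $F$, and quotienting by $F$ costs a Fitting layer that the naive induction does not give back. To beat this one needs a Hall--Higman-type analysis of the faithful coprime linear action of $A$ on $F$, carried out along the derived series $B=B^{(0)}\geq B^{(1)}\geq\cdots\geq B^{(d(B))}=1$: passing through a single abelian section $B^{(i)}/B^{(i+1)}$ can raise the Fitting height by at most $2$, which is exactly what manufactures the term $2d(B)$. The odd order of $B$ enters precisely here — in the minimal-polynomial and fixed-point estimates for the action of a $p'$-group on an $\mathbb{F}_p$-module and in Thompson's $A\times B$-lemma — and these estimates genuinely fail at $p=2$, which is why for even $|B|$ one must fall back to the assumption that $B$ is nilpotent, decompose $B$ along its Sylow subgroups, and settle for the weaker bound \eqref{eq:2}.

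I expect this Hall--Higman-style estimate — bounding the Fitting height contributed by a faithful coprime action in terms of the derived length of the acting group, with the sharp constant $2$ per derived layer — together with the delicate accounting needed to splice it to the reductions above (making sure that no ``$+1$'' is dropped at any stage) to be the main obstacle; the remaining verifications should be careful but routine.
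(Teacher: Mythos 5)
Your sketch leaves unproved exactly the step that carries all the content of the theorem. The paper's proof is short because it combines two inputs: Turull's tower machinery, which for any coprime factorisation gives $h(G)\leq h(A)+h(B)+2\min\{\ell_{\pi(A)}(G),\ell_{\pi(B)}(G)\}-1$ (Lemma~\ref{lemma41}), and the bound $\ell_{\pi(B)}(G)\leq d(B)$, which for a single prime is Hall--Higman/Bryukhanova (Theorem~\ref{thrm2.2}) and for a Hall subgroup of odd order is Kazarin's theorem (Theorem~\ref{thrm2.3}) --- this is precisely where the oddness of $|B|$ enters. In your proposal this quantitative input appears only as the announcement that passing through one abelian layer $B^{(i)}/B^{(i+1)}$ ``can raise the Fitting height by at most $2$'', to be obtained by a Hall--Higman-type analysis with minimal-polynomial estimates and Thompson's $A\times B$ lemma; no argument is given, and for a non-nilpotent Hall $\pi$-subgroup with $\pi$ a set of odd primes this is not a routine adaptation of Hall--Higman but essentially the main theorem of Kazarin's paper. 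So the heart of the proof is missing, not merely deferred.

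Moreover, the reduction you do spell out does not close. In the case where the relevant prime $p$ lies in $\pi(A)$ you claim that coprimality forces $\F A=F\cap A$, hence $h(A/(F\cap A))=h(A)-1$, so that the identity $h(G)=h(G/F)+1$ costs nothing in the induction. This is false: take $G=S_4=V\rtimes S_3$ with $V$ the Klein four-group, $A=D_8$ a Sylow $2$-subgroup and $B=C_3$. Here $\Phi(G)=1$, $F=\F G=V=F\cap A$ is self-centralising, yet $\F A=D_8>V$ and $h(A/(F\cap A))=1=h(A)$. In general one only gets $\F A=\O{p}{A}\supseteq\O{p}{G}$, with no control on the gap, so the induction through $G/\F G$ as you describe it yields $h(G)\leq h(A)+h(B)+2d(B)$, one more than claimed. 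This loss of a Fitting layer is exactly why the paper argues globally with $\pi$-lengths and towers (bounding $h(G)$ by $\nu_\sigma$, $\nu_{\sigma'}$, $\beta_\sigma$, $\beta_{\sigma'}$ of a Fitting tower) rather than inducting along the Fitting series; a correct proof along your lines would have to supply both the missing Hall--Higman/Kazarin-type estimate and a genuinely different mechanism for recovering the lost layer.
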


Before  continuing with our discussion we need to introduce some notation. Given a group $G$, we write
\begin{equation*}
\delta(G):=\max\{d(S)\mid S \textrm{ Sylow subgroup of }G\},
\end{equation*}
that is, $\delta(G)$ is the maximal derived length of the Sylow subgroups of $G$. We also bound the Fitting height of $G$ in terms of the group-invariants $\delta(A)$ and $\delta(B)$.

\begin{theorem}\label{thrmB}
Let $G=AB$ be a finite soluble group factorised by its proper subgroups $A$ and $B$ with $\gcd(|A|,|B|)=1$. Then 
\begin{equation*}
h(G)\leq h(A)+(2\delta(B)+1)h(B)-1.
\end{equation*}
\end{theorem}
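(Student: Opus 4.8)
The plan is to deduce Theorem~\ref{thrmB} from Theorem~\ref{thrmA} by induction on $h(B)$, stripping from $B$ one nilpotent term of its Fitting series at each step and charging it at most $2\delta(B)+1$ to $h(G)$. Throughout I would use that, since $\gcd(|A|,|B|)=1$ and $|G|=|A|\,|B|$, the subgroups $A$ and $B$ are Hall subgroups of $G$ with complementary prime sets $\pi'=\pi(A)$, $\pi=\pi(B)$; hence every $\pi$-section of $G$ is (isomorphic to) a section of $B$, so has Fitting height $\le h(B)$ and Sylow derived length $\le\delta(B)$. I would also use the standard facts that Fitting height does not grow under quotients, that $h(G)\le h(N)+h(G/N)$ for $N\trianglelefteq G$, that $h(K)\le h(G)$ for $K\le G$, and that $h(\FF{i}{K})=i$ for the terms of the Fitting series of $K$.

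The base case $h(B)\le 1$ is immediate: then $B$ is nilpotent and non-trivial (being proper), so $d(B)=\delta(B)$ and \eqref{eq:2} gives $h(G)\le h(A)+2d(B)=h(A)+(2\delta(B)+1)h(B)-1$. I would run the general argument by induction on $|G|$ as well, after a couple of easy reductions. If $B\trianglelefteq G$, then $h(G)\le h(B)+h(G/B)\le h(B)+h(A)\le h(A)+(2\delta(B)+1)h(B)-1$, the last step because $\delta(B)\ge 1$ and $h(B)\ge 1$. Next we may assume $\Phi(G)=1$: if $V\le\Phi(G)$ is a minimal normal subgroup of $G$ then $\F{G/V}=\F{G}/V$, so $h(G/V)=h(G)$, while $G/V=(AV/V)(BV/V)$ is a coprime factorisation, still proper (as $G=AV$ or $G=BV$ would force $G=A$ or $G=B$), with $h(AV/V)\le h(A)$, $h(BV/V)\le h(B)$ and $\delta(BV/V)\le\delta(B)$, so the assertion for $G/V$ gives it for $G$. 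Hence we may take $\Phi(G)=1$ and $B$ not normal in $G$; then $\F{G}$ is abelian and self-centralising, equal to $\O{\pi}{G}\times\O{\pi'}{G}$ with $\O{\pi}{G}\le B$, $\O{\pi'}{G}\le A$.

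Now let $h(B)=n\ge 2$ and let $L$ be the last non-trivial term of the lower Fitting series of $B$, i.e.\ the smallest normal subgroup of $B$ with $h(B/L)=n-1$; then $L$ is a nilpotent $\pi$-group with $d(L)\le\delta(B)$. The heart of the argument is to produce a normal subgroup $N$ of $G$ with $L\le N$, with $A\cap N$ equal to a term $\FF{i}{A}$ of the Fitting series of $A$ (so that $h(A\cap N)+h(A/(A\cap N))=h(A)$), and with
\[
h(N)\ \le\ i+2\delta(B)+1 .
\]
Granting this, $BN/N\cong B/(B\cap N)$ satisfies $h(BN/N)\le h(B/L)=n-1$ and $\delta(BN/N)\le\delta(B)$, while $AN/N\cong A/\FF{i}{A}$ has $h(AN/N)=h(A)-i$; applying the inductive hypothesis to the coprime factorisation $G/N=(AN/N)(BN/N)$ (and treating the degenerate cases $G=AN$, $G=BN$ directly from $h(G)\le h(N)+h(G/N)$), we obtain
\[
h(G)\ \le\ h(N)+h(G/N)\ \le\ (i+2\delta(B)+1)+\bigl((h(A)-i)+(2\delta(B)+1)(n-1)-1\bigr)\ =\ h(A)+(2\delta(B)+1)n-1 .
\]

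The main obstacle is exactly the construction of $N$ and the bound $h(N)\le i+2\delta(B)+1$: one cannot just take the normal closure $L^{G}$, since neither its $\pi'$-part $L^{G}\cap A$ nor its Fitting height is controlled a priori. The idea is to build $N$ from $L$ together with an appropriately chosen term of the Fitting series of $A$, using coprimality — the fixed-point and stability theory for coprime actions that underlies the proof of Theorem~\ref{thrmA} — and the structure of $\F{G}=\O{\pi}{G}\times\O{\pi'}{G}$ made available by the reduction $\Phi(G)=1$, so as to keep the $\pi$-part $N\cap B$ essentially nilpotent of derived length $\le\delta(B)$; then \eqref{eq:2}, applied to the coprime factorisation $N=(N\cap A)(N\cap B)$, yields $h(N)\le h(N\cap A)+2d(N\cap B)\le i+2\delta(B)$, the surplus $+1$ absorbing the passage between consecutive Fitting layers of $B$ — this is what accumulates to the coefficient $2\delta(B)+1$. (When $|B|$ is odd one uses \eqref{eq:1} instead of \eqref{eq:2} at this point, recovering the sharper bound of Theorem~\ref{thrmA}, consistently with $d(B)\le h(B)\,\delta(B)$.) Assembling the inequalities finishes the induction.
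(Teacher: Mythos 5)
Your overall induction scheme (on $h(B)$ and $|G|$, stripping the last lower-Fitting layer $L$ of $B$) is arithmetically consistent, and the reductions to $\Phi(G)=1$, $B\not\trianglelefteq G$, and the base case $h(B)\le 1$ via \eqref{eq:2} are fine. But the proof has a genuine gap exactly where you yourself locate ``the main obstacle'': the existence of a normal subgroup $N$ of $G$ with $L\le N$, $A\cap N=\FF{i}{A}$ for some $i$, and $h(N)\le i+2\delta(B)+1$. This is never constructed; what you offer is a programmatic sentence (``build $N$ from $L$ together with an appropriately chosen term of the Fitting series of $A$, using coprimality \dots so as to keep the $\pi$-part $N\cap B$ essentially nilpotent of derived length $\le\delta(B)$''). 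Nothing in the argument shows that any normal subgroup containing $L$ can be arranged to meet $A$ in a Fitting term of $A$, nor that its intersection with $B$ is nilpotent (or even of derived length $\le\delta(B)$ rather than just $\le d(B)$), and the word ``essentially'' together with the claim that a spare $+1$ ``absorbs the passage between consecutive Fitting layers'' is not a proof of the inequality $h(N)\le i+2\delta(B)+1$ that the whole induction rests on. Normal closures of $L$ are, as you note, uncontrollable a priori, and no substitute construction is given; so the key step is missing, not merely unpolished.

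It is also worth noting that the intended bound really encodes a statement about $\pi$-length: the coefficient $2\delta(B)+1$ arises in the paper from the inequality $\ell_{\pi(B)}(G)\le\delta(B)h(B)$ (Lemma~\ref{lemma4.4}, proved by induction from Theorem~\ref{thrm2.2}), fed into the tower estimate $h(G)\le h(A)+h(B)+2\min\{\ell_{\sigma}(G),\ell_{\sigma'}(G)\}-1$ of Lemma~\ref{lemma41}; Theorem~\ref{thrmB} is then immediate. Any ``layer-stripping'' argument of the kind you propose would in effect have to reprove a local version of Lemma~\ref{lemma4.4} when bounding $h(N)$, so I would recommend either carrying out the construction of $N$ in full detail (which I do not see how to do along the lines sketched) or deducing the theorem from Lemma~\ref{lemma41} together with a bound on $\ell_{\pi(B)}(G)$, which is the route the paper takes.
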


Both Theorems~\ref{thrmA} and~\ref{thrmB} extend and generalise some well-known results on groups admitting a factorisation with subgroups of coprime order, see for example the two monographs~\cite[Chapter~$2$]{AFD} and~\cite[pages~$133$--$135$]{BB}. Observe that when $A$ and $B$ are both nilpotent, we have $h(A)=h(B)=1$ and the inequality in Theorem~\ref{thrmB} specialises to the inequality of the main result in~\cite{Gemma}.

When $B$ is nilpotent, we have $\delta(B)=d(B)$ and $h(B)=1$, and thus Theorem~\ref{thrmA}~\eqref{eq:2} follows immediately from Theorem~\ref{thrmB}.   

The hypothesis of $|B|$ being odd in Theorem~\ref{thrmA}~\eqref{eq:1} is important in our proof because at a critical juncture we apply a remarkable theorem of Kazarin~\cite{Ka} (which requires $B$ having odd order). However, we believe that our hypothesis is only factitious and in fact we pose the following:
\begin{conjecture}
Let $G=AB$ be a finite soluble group factorised by its proper subgroups $A$ and $B$ with $\gcd(|A|,|B|)=1$. Then $$
h(G)\leq h(A)+h(B)+2d(B)-1.$$
\end{conjecture}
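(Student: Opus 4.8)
The natural plan is to carry the proof of Theorem~\ref{thrmA}\eqref{eq:1} as far as it will go and then to eliminate the single appeal to Kazarin's theorem. One argues by contradiction, choosing a counterexample $G=AB$ of least order, and performs the standard reductions for coprime factorisations. First, $G$ has a unique minimal normal subgroup $N$: two distinct ones $N_1,N_2$ would embed $G$ into $G/N_1\times G/N_2$, each quotient $G/N_i=(AN_i/N_i)(BN_i/N_i)$ carries an induced coprime factorisation with no larger invariants, minimality bounds each $h(G/N_i)$, and $h(G)=\max_i h(G/N_i)$ contradicts the choice of $G$. Next $\Phi(G)=1$, since $G/\Phi(G)$ has the same Fitting height and an induced factorisation; hence $N=\F{G}=\cent{G}{N}$ is an elementary abelian $p$-group, $G=N\rtimes H$ with $H\cong G/N$ acting faithfully and irreducibly on $N$, and $h(G)=h(G/N)+1$. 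Exactly one of $|A|,|B|$ is divisible by $p$. If $p\mid|A|$ then $A$ contains a Sylow $p$-subgroup of $G$, so $N\le A$, $N\cap B=1$, and $H=\overline A\,\overline B$ with $\overline A=A/N$, $\overline B\cong B$ is a coprime factorisation of $H$ with $h(\overline A)\le h(A)$, $h(\overline B)=h(B)$, $d(\overline B)=d(B)$; if $p\mid|B|$ then symmetrically $N\le B$ and $\overline B=B/N$, with $h(\overline B)\le h(B)$ and $d(\overline B)\le d(B)$. In either case minimality gives $h(G/N)=h(H)\le h(\overline A)+h(\overline B)+2d(\overline B)-1\le h(A)+h(B)+2d(B)-1$, so $h(G)\le h(A)+h(B)+2d(B)$, and since $G$ is a counterexample equality holds everywhere: $H$ is an extremal configuration for the inductive bound, $h(G)=h(A)+h(B)+2d(B)$, and in the case $p\mid|B|$ one even has $d(B/N)=d(B)$ and $h(B/N)=h(B)$.

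The work now is to contradict the presence of the one surplus Fitting layer $N$, and here I would imitate the proof of Theorem~\ref{thrmA}\eqref{eq:1}. After the same reductions, that argument locates inside $H$ a section of $\overline B$ arising from a term of the derived series of $B$, together with a suitable conjugate of $\overline A$, and invokes Kazarin's theorem to conclude that these generate a \emph{proper} subgroup of $H$ (equivalently, that the corresponding product inside $G$ does not fill up $G$); this proper subgroup inherits a coprime factorisation with strictly smaller invariants, induction applies, and the Fitting-height comparison — using that at most two Fitting layers are contributed by each layer of $B$ — absorbs the surplus. The plan is to reproduce this verbatim up to the point where Kazarin's theorem is used and to replace that step. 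What Kazarin's theorem provides, for $|B|$ odd, is a control on the orbits — hence on the fixed-point subgroups — of Hall subgroups of $B$ acting coprimely on the relevant $p'$-section, precisely what prevents such a product from exhausting $G$. The replacement target is therefore a statement to the effect that a soluble group of arbitrary order acting coprimely and faithfully on an irreducible module still yields a fixed-point structure, along its derived-series sections, strong enough to keep the Fitting-height increment within $2d(B)$ layers.

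I would pursue this along two lines, and expect the obstacle to lie squarely here. The first is to prove the missing even-order analogue of Kazarin's theorem directly; this is almost certainly the genuine difficulty, because Kazarin's proof exploits features available only for groups of odd order — ultimately the Feit--Thompson theorem — and the naive even-order statement is false, so even formulating the correct replacement and proving it would constitute a substantial new result. The second, more promising line is to bypass the generation step altogether and bound the number of Fitting layers that $N$ can add by applying Turull-type estimates for the Fitting height under coprime action — of the shape $h(X)\le h(\cent{X}{Y})+2\ell(Y)$ — to the coprime action of $\overline B$ on the $\pi(B)'$-sections of $\FF{h(G)-1}{G}$, combined with Hall--Higman $p$-length bounds (which is where the derived length $d(B)$, rather than the coarser chief length $\ell(B)$, should enter), so as to show that the action of $\overline B$ on $N$ is already accounted for within $\FF{h(H)}{G}$. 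The delicate, and apparently still missing, ingredient in this second line is a fixed-point estimate for coprime actions of even-order groups sharp enough to replace the regular-orbit phenomena that make the odd-order bookkeeping tight; supplying such an estimate — or circumventing it — is exactly what the conjecture asks for.
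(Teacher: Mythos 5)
There is a genuine gap, and your own text concedes it: the final paragraph ends by saying that the decisive fixed-point/length estimate for even-order groups is ``apparently still missing'' and that supplying it ``is exactly what the conjecture asks for.'' That is an accurate self-assessment, but it means the proposal is a research plan, not a proof. The statement you were asked to prove is posed in the paper as an open \emph{conjecture} precisely because this one ingredient is unavailable: in the paper, Theorem~\ref{thrmA}~\eqref{eq:1} is an immediate consequence of Lemma~\ref{lemma41} (the tower-counting bound $h(G)\leq h(A)+h(B)+2\min\{\ell_\sigma(G),\ell_{\sigma'}(G)\}-1$ with $\sigma=\pi(A)$) together with Theorem~\ref{thrm2.3}, i.e.\ Kazarin's bound $\ell_{\pi(B)}(G)\leq d(B)$ for $|B|$ odd. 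The conjecture would follow verbatim from the same two lemmas if one knew $\ell_\pi(G)\leq d(G_\pi)$ for sets $\pi$ containing $2$ (with $G_\pi$ non-nilpotent), and that Hall--Higman-type statement is exactly the open problem; nothing in your proposal proves it, circumvents it, or even reduces the conjecture to a precisely formulated weaker statement. (Your assertion that ``the naive even-order statement is false'' is also unsubstantiated; for a single prime $p=2$ it is in fact true, by Brjuhanova's theorem quoted as part of Theorem~\ref{thrm2.2}.)

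Two further points. First, the minimal-counterexample reductions in your first paragraph, while standard, do not advance matters: in the critical case they only return $h(G)\leq h(A)+h(B)+2d(B)$, losing exactly the one Fitting layer at issue, and you supply no mechanism to recover it --- the paper's approach deliberately avoids this kind of induction by working with Turull towers and $\pi$-lengths globally (Lemmas~\ref{lemma34} and~\ref{lemma41}). Second, you misdescribe the proof you intend to imitate: the paper's argument for Theorem~\ref{thrmA}~\eqref{eq:1} contains no step in which Kazarin's theorem is used to show that some product of a section of $B$ with a conjugate of $A$ generates a proper subgroup; Kazarin enters only through the inequality $\ell_{\pi(B)}(G)\leq d(B)$. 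So the ``replacement target'' you formulate is aimed at an argument that is not the one in the paper, and even for that target no proof is offered.
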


We  also prove:
\begin{theorem}\label{thrmC}
Let $G=AB$ be a finite soluble group factorised by its proper subgroups $A$ and $B$ with $\gcd(|A|,|B|)=1$. Then 
\begin{equation*}
h(G)\leq h(A)\delta(A)+h(B)\delta(B).
\end{equation*}
\end{theorem}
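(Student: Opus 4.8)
The plan is to argue by induction on $|G|$, supposing $G$ is a counterexample of least order, and first to run the by-now standard reductions for problems of this type. A coprime factorisation passes to quotients --- if $N\trianglelefteq G$ then $G/N=(AN/N)(BN/N)$ is again one, with $h(AN/N)\le h(A)$, $\delta(AN/N)\le\delta(A)$, and symmetrically for $B$ --- so minimality forces every proper quotient of $G$ to satisfy the asserted bound; combining this with the elementary remark that if $N_1,N_2\trianglelefteq G$ with $N_1\cap N_2=1$ then $G$ embeds in $G/N_1\times G/N_2$, whence $h(G)\le\max\{h(G/N_1),h(G/N_2)\}$, one reduces to the situation in which $G$ has a unique minimal normal subgroup, $\F G=\O pG$ for a single prime $p$, $\O{p'}G=1$, and $\F G$ is abelian with $\cent G{\F G}=\F G$. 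Write $V:=\F G=\O pG$, so $h(G)=1+h(G/V)$ and $\cent GV=V$. Since $p$ divides $|A||B|$, up to interchanging $A$ and $B$ (the theorem being symmetric in them) we may assume $p\mid|A|$; then $B$ is a $p'$-group, so $V\le A$, indeed $V\le\O pA$.

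The first genuinely new step is that $\F A$ is a $p$-group, namely $\O pA$. Indeed $\F A$ is nilpotent, hence $\F A=\O pA\times Q$ with $Q$ (its Hall $p'$-subgroup) characteristic in $\F A$, so $Q\trianglelefteq A$; the product being direct, $[Q,\O pA]=1$, hence $[Q,V]=1$ and $Q\le\cent GV=V$, forcing $Q=1$ as $Q$ is a $p'$-group and $V$ a $p$-group. Next, apply the inductive hypothesis to $G/V=(A/V)(BV/V)$: this is again a coprime factorisation with proper factors (if $A/V=G/V$ then $A=G$, impossible; if $BV/V=G/V$ then $A=V$ is abelian and normal, $G=V\rtimes B$, and $h(G)=1+h(B)\le 1+h(B)\delta(B)$ directly). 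Since $BV/V\cong B$, this gives $h(G/V)\le h(A/V)\delta(A/V)+h(B)\delta(B)$, so it suffices to prove
\[
h(A/V)\,\delta(A/V)\ \le\ h(A)\,\delta(A)-1 .
\]
If $\O pA=V$ this is immediate: then $A/V=A/\F A$, so $h(A/V)=h(A)-1$, and $(h(A)-1)\delta(A)\le h(A)\delta(A)-1$ because $\delta(A)\ge1$. Hence one is left with the case $V\subsetneq\O pA=\F A$; equivalently $V$ is a self-centralising, non-central, normal subgroup of the $p$-group $\O pA$, since $\cent{\O pA}V=\O pA\cap\cent GV=V$.

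The hard part is precisely this residual configuration, where the crude estimates $h(A/V)\le h(A)$ and $\delta(A/V)\le\delta(A)$ need not individually be improvable by a unit, so a finer argument is needed to see that the product $h(A/V)\delta(A/V)$ nonetheless drops by at least $1$. The approach I would take is to exploit that $A/V$, and in particular $\O pA/V$, acts faithfully on the abelian $p$-group $V$ (because $\cent GV=V$): factoring out $V$ genuinely unwinds the bottom of a Sylow $p$-subgroup of $A$, and one must quantify the cost to both $\delta$ and $h$, feeding in the coprime action of the $p'$-part of $A$ on $V$ and, where the Fitting height of a group acting on an abelian $p$-group has to be controlled, invoking Theorem~\ref{thrmB} (or the corresponding statement for a coprime operator group). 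Carrying out this bookkeeping precisely enough to recover the exact constant $-1$, rather than a bound that is off by one, is the essential obstacle; everything preceding it is a routine adaptation of known arguments.
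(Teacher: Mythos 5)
Your reductions (unique minimal normal subgroup, $\F{G}=\O{p}{G}=V$ with $\cent GV=V$, $p\in\pi(A)$, $\F{A}=\O{p}{A}$, induction applied to $G/V=(A/V)(BV/V)$) are sound, but the proof is not complete: by your own account the case $V\subsetneq \O{p}{A}=\F{A}$ is left open, and that case is the whole problem. The inequality you would need there, $h(A/V)\,\delta(A/V)\le h(A)\,\delta(A)-1$, is not supplied by the configuration: when $V$ is a proper subgroup of $\F{A}$, neither $h(A)$ nor $\delta(A)$ need visibly drop on passing to $A/V$, and the faithful action of $A/V$ on the abelian $p$-group $V$, the coprime action of $B$, or an appeal to Theorem~\ref{thrmB} give no evident mechanism for recovering the missing unit. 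So this is a genuine gap, not routine bookkeeping, and it is doubtful that the induction can be closed while the inductive quantity is the product $h(A)\delta(A)$ itself.

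The paper's proof locates the decrease in $G$ rather than in $A$, which is exactly what your scheme lacks. For a Hall subgroup $A$ of $G$ it sets $\ell^i(G,A):=\max\{\ell_q(G)\mid q\in\pi(\RR{i-1}{A}/\RR{i}{A})\}$ along the lower nilpotent series of $A$ and $\Lambda_G(A):=\sum_{i=1}^{h(A)}\ell^i(G,A)$, and proves $h(G)\le\Lambda_G(A)+\Lambda_G(B)$ by induction on $|G|$ (Lemma~\ref{thrm48}). After the same reduction you perform, $\RR{h(A)-1}{A}\le\F{A}=\O{p}{A}$ is a $p$-group, so the last summand of $\Lambda_G(A)$ equals $\ell_p(G)$; since $\O{p'}{G}=1$ and $\F{G}=\O{p}{G}$, one has $\ell_p(G/\F{G})=\ell_p(G)-1$, hence $\Lambda_{G/\F{G}}(A/\F{G})\le\Lambda_G(A)-1$ \emph{whether or not} $V=\F{A}$, while $\Lambda_{G/\F{G}}(B\F{G}/\F{G})=\Lambda_G(B)$ because $p\notin\pi(B)$. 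Only at the very end is Theorem~\ref{thrm2.2} invoked to convert, termwise, $\ell_q(G)\le d(G_q)\le\delta(A)$, giving $\Lambda_G(A)\le h(A)\delta(A)$ and the stated bound. If you wish to repair your argument, replace $h(A)\delta(A)$ by such a weighted sum (with weights the $p$-lengths of $G$) before inducting; as written, the step you flag as the essential obstacle is precisely the content of the theorem that remains unproved.
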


Finally, with an immediate application of Theorem~\ref{thrmA} and of the machinery developed in Section~\ref{section3}, we prove: 
\begin{corollary}\label{corcor}
Let $G=AB$ be a finite soluble group factorised by its proper subgroups $A$ and $B$ with $\gcd(|A|,|B|)=1$. For each $p\in \pi(B)$, let $B_p$ be a Sylow $p$-subgroup of $B$. Then $$h(G)\leq h(A)+2\sum_{p\in\pi(B)}d(B_p).$$ In particular, $h(G)\leq h(A)+2|\pi(B)|\delta(B)$.
\end{corollary}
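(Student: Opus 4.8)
The plan is to induct on $|\pi(B)|$, removing one prime from $B$ at each step by invoking Theorem~\ref{thrmA}~\eqref{eq:2} (the bound valid when the second factor is nilpotent), applied to a single Sylow subgroup of $B$ at a time.

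For the base case $|\pi(B)|=1$ the subgroup $B$ is a $p$-group, hence nilpotent, and Theorem~\ref{thrmA}~\eqref{eq:2} yields at once $h(G)\le h(A)+2d(B)=h(A)+2\sum_{p\in\pi(B)}d(B_p)$. For the inductive step, with $|\pi(B)|\ge 2$, I would fix $p\in\pi(B)$, set $\pi:=\pi(B)\setminus\{p\}$, and look for a subgroup $C\le G$ with $\pi(C)=\pi(A)\cup\pi$ that still admits a coprime factorisation $C=A_0B_0$, where $A_0$ is a conjugate of $A$ in $G$ and $B_0$ is a conjugate of a Hall $\pi$-subgroup of $B$. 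This is where the material of Section~\ref{section3} is meant to be used: starting from a Hall $\pi$-subgroup of $B$, one produces conjugates $A_0$ of $A$ and $B_0$ of this Hall subgroup that permute, so that $C:=A_0B_0=B_0A_0$ is a genuine subgroup, necessarily a Hall $(\pi(A)\cup\pi)$-subgroup of $G$ and a coprime factorisation $C=A_0B_0$ by proper subgroups (coprimality being free, since $\pi(A)\cap\pi(B)=\emptyset$). Counting orders gives $|G:C|=|B|/|B_0|=|B_p|=|G|_p$, so $C$ is a Hall $p'$-subgroup of $G$; hence for any $P\in\Syl_p(G)$ one has $C\cap P=1$, whence $G=CP$ as a set. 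Thus $G=CP$ is a factorisation by the proper subgroups $C$ and $P$ of coprime orders, with $P$ a $p$-group and $d(P)=d(B_p)$.

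Granting this, the conclusion follows by a short computation. By the inductive hypothesis applied to $C=A_0B_0$ (legitimate since $|\pi(B_0)|=|\pi|<|\pi(B)|$), together with $h(A_0)=h(A)$ and $d((B_0)_q)=d(B_q)$ for $q\in\pi$ (derived lengths of Sylow subgroups being conjugacy invariants), one gets $h(C)\le h(A)+2\sum_{q\in\pi}d(B_q)$. Then Theorem~\ref{thrmA}~\eqref{eq:2} applied to $G=CP$ gives $h(G)\le h(C)+2d(P)=h(C)+2d(B_p)$, and combining the two inequalities yields
\[
h(G)\le h(A)+2\sum_{q\in\pi}d(B_q)+2d(B_p)=h(A)+2\sum_{q\in\pi(B)}d(B_q),
\]
as claimed. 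The last assertion is immediate, since $d(B_p)\le\delta(B)$ for every $p\in\pi(B)$ by definition of $\delta(B)$, so that $\sum_{p\in\pi(B)}d(B_p)\le|\pi(B)|\,\delta(B)$.

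I expect the one genuinely delicate point to be the middle step: checking that the coprime factorisation of $G$ descends, up to conjugacy, to a coprime factorisation of the Hall subgroup $C$ with $\pi(C)=\pi(A)\cup\pi$. Everything else is bookkeeping, but without this the induction cannot even get off the ground, since the statement being proved only concerns groups that carry a coprime factorisation; I would therefore expect to quote (or reprove) the relevant permutability/refinement lemma for Hall factorisations from Section~\ref{section3}.
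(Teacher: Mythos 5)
Your argument is correct, and it is essentially a mirror image of the paper's proof: both induct on $|\pi(B)|$ and spend one application of Theorem~\ref{thrmA}~\eqref{eq:2} per prime of $\pi(B)$, but where you pass to the Hall $p'$-subgroup $C$ (applying the inductive hypothesis to $C=A_0B_0$ and Theorem~\ref{thrmA}~\eqref{eq:2} to the factorisation $G=CP$), the paper keeps $G$ fixed and re-partitions the factorisation: with a Sylow basis adapted to $A$ and $B$ it writes $G=(AG_q)B_{q'}$, applies the inductive hypothesis to this factorisation of $G$ itself, and uses Theorem~\ref{thrmA}~\eqref{eq:2} on the subgroup $AG_q=A\cdot G_q$ to get $h(AG_q)\le h(A)+2d(G_q)$. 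The one point you flag as delicate --- producing permuting conjugates so that $C=A_0B_0$ is a genuine subgroup --- is exactly what Lemma~\ref{lemma:2.1} delivers (note it lives in Section~\ref{1.1}, not Section~\ref{section3}; no tower machinery is needed): taking a Sylow basis $\{G_r\}_{r\in\pi(G)}$ with $A=\prod_{r\in\pi(A)}G_r$ and $B=\prod_{r\in\pi(B)}G_r$, no conjugation is required at all, since $C:=A\prod_{q\in\pi(B)\setminus\{p\}}G_q$ is already a Hall $p'$-subgroup factorised coprimely by $A$ and a Hall subgroup of $B$, and $G=CG_p$ with $G_p$ a Sylow $p$-subgroup of both $G$ and $B$ (because $p\nmid|A|$), so $d(G_p)=d(B_p)$ as you use. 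With that reference supplied, your properness and coprimality checks are routine (they hold since $A\neq1\neq B$ and $|\pi(B)|\ge2$ in the inductive step), and your route gives the same bound with the same total cost; the paper's arrangement merely avoids descending to a subgroup, at the price of bounding the Fitting height of the enlarged factor $AG_q$ instead.
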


In Section~\ref{1.1} we introduce some basic notation and some preliminary results that we use throughout the whole paper. In Section~\ref{section3} we present our main tool (the \textit{towers} as defined by Turull~\cite{Tu}) and we prove some auxiliary results. Section~\ref{section4} is dedicated to the proof of Theorems~\ref{thrmA} and~\ref{thrmB} and of Corollary~\ref{corcor}. The proof of Theorem~\ref{thrmC} (which requires a slightly different machinery) is postponed to Section~\ref{section5}.

\section{Notation and preliminary results}\label{1.1}

Given a group $G$, we denote by $\F G$ the \textit{Fitting subgroup} of $G$ (that is, the largest normal nilpotent subgroup of $G$). Moreover, the Fitting series of $G$ is defined inductively by $\FF 0 G:=1$ and $\FF {i+1}G/\FF i G:=\F{G/\FF i G}$, for every $i\ge 0$. Clearly, $\FF i G<\FF {i+1} G$ when $\FF i G<G$, and the minimum natural number $h$ with $\FF h G=G$ is called the \textit{Fitting height} (or Fitting length) of $G$ and is denoted by $h(G)$. Similarly, the \textit{derived length} of $G$ is indicated by $d(G)$.

We let $|G|$ denote the order of $G$ and we let $\pi(G)$ denote the set of prime divisors of $|G|$. 
Given a prime number $p$, we write $G_p$ for a Sylow $p$-subgroup of $G$. A \textit{Sylow basis} of $G$ is a family $\{G_p\}_{p\in \pi(G)}$ of Sylow subgroups of $G$ such that
$G_{p}G_q=G_qG_p$ for any $p,q\in \pi(G)$. By a pioneering result of Philip Hall~\cite[$9.1.7$,~$9.1.8$ and $9.2.1$~(ii)]{Rob}, every (finite soluble) group has a Sylow basis. In particular, for every set of primes $\pi$, $G$ contains a Hall $\pi$-subgroup, which will be denoted by $G_{\pi}$.

Given a set $\pi$ of prime numbers, we set $\pi':=\{p\textrm{ prime}\mid p\notin \pi\}$. Moreover, when $\pi=\{p\}$, for simplicity we write $p'$ for $\pi'$. As usual, $\O \pi G$ is the largest normal $\pi$-subgroup of $G$ and the upper $\pi'\pi$-\textit{series} of $G$ is generated by  applying ${\bf O}_{\pi'}$ and ${\bf O}_\pi$ (in this order) repeatedly to $G$, that is, the series
$1=P_0\leq N_0\leq P_1\leq N_1\leq \cdots \leq P_i\leq N_i\leq \cdots $ defined by
\[
N_i/P_i:=\O{\pi'}{G/P_i}\quad\textrm{and}\quad P_{i+1}/N_i:=\O \pi {G/N_i}.
\]
This is a series of characteristic subgroups having factor groups $\pi'$- and $\pi$-groups, alternately. The minimum natural number $\ell$ such that the $\pi'\pi$-series terminates is named the $\pi$-\textit{length} of $G$ and denoted by $\ell_\pi(G)$. When $\pi=\{p\}$, we write simply $\O p G$ and $\ell_p(G)$.

We first state a basic elementary result which will be used repeatedly and without comment.

\begin{lemma}\label{lemma:2.1}
Let $G=AB$ be a group factorised by $A$ and $B$ with $\gcd(|A|,|B|)=1$. Then there exists a Sylow basis $\{G_p\}_{p\in \pi(G)}$ with $A=\prod_{p\in \pi(A)}G_p$ and $B=\prod_{p\in \pi(B)}G_p$.
\end{lemma}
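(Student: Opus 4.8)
The plan is to build the required Sylow basis starting from any factorisation $G=AB$ with $\gcd(|A|,|B|)=1$. First I would recall that, since $|A|$ and $|B|$ are coprime, the order of $G$ satisfies $|G|=|A||B|$, so $\pi(G)$ is the disjoint union of $\pi(A)$ and $\pi(B)$; moreover $A$ is a Hall $\pi(A)$-subgroup of $G$ and $B$ is a Hall $\pi(B)$-subgroup of $G$. Next, inside $A$ choose a Sylow basis $\{A_p\}_{p\in\pi(A)}$ of $A$, and inside $B$ choose a Sylow basis $\{B_q\}_{q\in\pi(B)}$ of $B$, using Philip Hall's theorem cited in the text. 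Each $A_p$ is then a Sylow $p$-subgroup of $G$ (because $A$ is a Hall $\pi(A)$-subgroup), and likewise each $B_q$ is a Sylow $q$-subgroup of $G$. I would set $G_p:=A_p$ for $p\in\pi(A)$ and $G_q:=B_q$ for $q\in\pi(B)$; then automatically $A=\prod_{p\in\pi(A)}G_p$ and $B=\prod_{q\in\pi(B)}G_q$, and what remains is to verify that $\{G_r\}_{r\in\pi(G)}$ is genuinely a Sylow basis of $G$, i.e.\ that $G_rG_s=G_sG_r$ for all $r,s\in\pi(G)$.

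The commuting condition is immediate when $r,s$ both lie in $\pi(A)$ (since $\{A_p\}$ is a Sylow basis of $A$) or both lie in $\pi(B)$; the only case requiring work is $r\in\pi(A)$, $s\in\pi(B)$. Here $G_rG_s=A_rB_s$, and I would like to show this is a subgroup of $G$. The cleanest route uses the permutability of the factors: one shows that $A_r B = B A_r$ and, dually, $A B_s = B_s A$, and then combines these. Concretely, $A_rB$ has order $|A_r||B|$ (a $\{r\}\cup\pi(B)$-number) and one can check it is closed under products; then $A_rB$ is a Hall $\{r\}\cup\pi(B)$-subgroup of $G$, inside which $B_s$ is a Sylow $s$-subgroup and $A_r$ is a Sylow $r$-subgroup. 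Applying Hall's theorem \emph{inside} the soluble group $A_rB$ gives a Sylow basis of $A_rB$; but I actually want the specific $A_r$ and $B_s$ to permute, so a small adjustment (conjugating $A_r$ or $B_s$ within $A_rB$) may be needed. The point to be careful about is exactly this: an arbitrary pair of Sylow subgroups need not be permutable, only those belonging to a common Sylow basis, so I must either build the whole basis simultaneously or argue that the choices can be made compatibly.

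A more robust way to organise the argument, which I expect to be the actual proof, is to work with a single Sylow basis of $G$ from the outset and then \emph{move} it by conjugation so that it becomes adapted to $A$ and $B$. Start with any Sylow basis $\{S_r\}_{r\in\pi(G)}$ of $G$. Its Hall $\pi(A)$-subgroup $\prod_{p\in\pi(A)}S_p$ is some conjugate $A^g$ of $A$, and its Hall $\pi(B)$-subgroup $\prod_{q\in\pi(B)}S_q$ is some conjugate $B^h$ of $B$; using the conjugacy of Hall subgroups in soluble groups and the fact that $G=AB$ forces $A^g$ and $B^h$ to be conjugate to $A$ and $B$ respectively, one adjusts by a single element of $G$ so that the resulting basis has $A$ itself as its $\pi(A)$-part and $B$ itself as its $\pi(B)$-part. (One uses that $\norm G A G_{\pi(B)}=G$ type counting, or equivalently that the number of such bases is transitive under $G$-conjugation, to see one element suffices.) Then $\{S_r\}$ after this adjustment is the desired basis. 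The main obstacle in either approach is the bookkeeping needed to guarantee that a \emph{single} conjugation (not one for $A$ and a different one for $B$) aligns both factors at once; this hinges on the coprimeness of $|A|$ and $|B|$ and on the factorisation $G=AB$, and is the one place where those hypotheses are genuinely used.
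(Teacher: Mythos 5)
Both of your routes stop short at exactly the point that constitutes the actual content of the lemma. The first route (gluing a Sylow basis of $A$ to one of $B$) founders where you say it does: arbitrary Sylow subgroups of $G$ taken from the two factors need not permute, and ``a small adjustment by conjugation may be needed'' is not an argument, since the adjustments for different pairs $(r,s)$ must be made compatibly across the whole family. The second route is the right idea, but the decisive claim --- that a \emph{single} conjugation makes the $\pi(A)$-part of a Sylow basis equal to $A$ and simultaneously its $\pi(B)$-part equal to $B$ --- is precisely what you defer as ``bookkeeping'', and the counting statement you gesture at ($\norm G A G_{\pi(B)}=G$) is neither proved nor obviously the right tool. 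So, as written, there is a genuine gap: the one nontrivial assertion is announced rather than established.

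The gap in your second route can, however, be closed with a short argument, and doing so gives a proof genuinely different from the paper's. Since $\gcd(|A|,|B|)=1$ and $G=AB$, we get $A\cap B=1$ and $|G|=|A||B|$, so $A$ and $B$ are Hall $\pi(A)$- and $\pi(B)$-subgroups of $G$, as you note. Take any Sylow basis of $G$; its $\pi(A)$-part (the product of its Sylow subgroups for the primes in $\pi(A)$) is a Hall $\pi(A)$-subgroup, so after conjugating the whole basis by one element you may assume this part equals $A$. Let $B^{*}$ be the $\pi(B)$-part of the new basis. By conjugacy of Hall subgroups, $B^{*}=B^{y}$ for some $y\in G$; writing $y=ba$ with $b\in B$ and $a\in A$ (using $G=BA$) gives $B^{*}=B^{a}$. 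Conjugating the basis by $a^{-1}\in A$ fixes $A$ and carries $B^{*}$ to $B$, so the resulting basis $\{G_p\}_{p\in\pi(G)}$ satisfies $A=\prod_{p\in\pi(A)}G_p$ and $B=\prod_{p\in\pi(B)}G_p$. This is exactly where the factorisation and the coprimality are used, and it is the step missing from your write-up. By contrast, the paper does not conjugate at all: it invokes \cite[Lemma~$1.3.2$]{AFD} to choose, for each prime $p$, Hall $p'$-subgroups $A_{p'}\leq A$ and $B_{p'}\leq B$ with $A_{p'}B_{p'}$ a Hall $p'$-subgroup of $G$, and then defines $G_p:=\bigcap_{q\neq p}A_{q'}B_{q'}$. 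Your (completed) approach trades that cited lemma for the elementary conjugation argument above; either is acceptable, but without that argument your proposal does not yet prove the lemma.
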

\begin{proof}
From~\cite[Lemma~$1.3.2$]{AFD}, we see that for every $p\in \pi(G)$ there exists a Hall $p'$-subgroup $A_{p'}$ of $A$ and a Hall $p'$-subgroup $B_{p'}$ of $B$ such that $A_{p'}B_{p'}$ is a Hall $p'$-subgroup of $G$. Now, for each $p\in \pi(G)$, define $G_p:=\bigcap_{q\in \pi(G)\setminus\{p\}}A_{q'}B_{q'}$. A computation shows that $\{G_p\}_{p\in \pi(G)}$ is a Sylow basis of $G$ (see for example~\cite[$9.2.1$]{Rob}). Moreover, $A=\prod_{p\in \pi(A)}G_p$ and $B=\prod_{p\in \pi(B)}G_p$.
\end{proof}

The next two results are crucial for our proofs of Theorems~\ref{thrmA} and~\ref{thrmB}. 

\begin{theorem}\label{thrm2.2}
Let $G$ be a group and let $p$ be a prime. Then $\ell_p(G)\leq d(G_p)$.
\end{theorem}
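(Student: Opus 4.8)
The plan is to prove the classical bound $\ell_p(G) \leq d(G_p)$ by induction on $|G|$, using the fact that for a $p$-soluble (here: soluble) group, $\O{p'}{G}$ acts trivially on $G/\O{p'}{G}$'s ``$p$-heart'' in a controlled way. Concretely, I would first reduce to the case $\O{p'}{G} = 1$: if $N = \O{p'}{G} \neq 1$, then passing to $\bar G = G/N$ we have $\ell_p(\bar G) = \ell_p(G)$ (the upper $p'p$-series of $\bar G$ is the image of that of $G$), while $\bar G_p \cong G_p$, so $d(\bar G_p) = d(G_p)$, and induction applies. So assume $\O{p'}{G} = 1$, hence $C := \O{p}{G}$ is a nontrivial normal $p$-subgroup with $\cent{G}{C} \leq C$.

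The heart of the argument is the step that reduces the $p$-length by one at the cost of reducing $d(G_p)$ by one. Set $\bar G = G/C$. Its $p$-length satisfies $\ell_p(G) \leq \ell_p(\bar G) + 1$, since prepending $C = \O p G$ to the upper $p'p$-series of $\bar G$ (after checking $\O{p'}{\bar G}=1$ is not needed — we just use $C \le P_1(G)$) gives a valid upper $p'p$-series for $G$ of length at most $\ell_p(\bar G)+1$; more carefully, $P_1(G) \supseteq C$ so the whole $p'p$-series of $G$ above $C$ maps onto that of $\bar G$, giving $\ell_p(G) \le \ell_p(\bar G)+1$. By induction $\ell_p(\bar G) \leq d(\bar G_p)$ where $\bar G_p = G_p C / C \cong G_p / (G_p \cap C) = G_p/C$. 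So it suffices to show $d(G_p/C) \leq d(G_p) - 1$, equivalently that the last nontrivial derived subgroup $G_p^{(d-1)}$ (where $d = d(G_p)$) is contained in $C$. Since $G_p^{(d-1)}$ is abelian and characteristic in $G_p$, the standard trick is: the key is not quite this direct inclusion but rather a variant where one first replaces $G$ by $G/\O{p',p}$-type quotients, but the cleanest route uses the following.

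Actually the clean route is via Hall--Higman-type reasoning internal to solubility: assume $\O{p'}{G}=1$, so $F := \O{p}{G}$ is self-centralising. Consider $V := F/\Phi(F)$ as a module for $G/F$ over $\mathbf{F}_p$; actually, to get the derived-length bound one argues on a chief series. The slick classical proof (due to this being a well-known result) proceeds instead by: let $\ell = \ell_p(G)$ and pick the upper $p'p$-series $1 = P_0 \le N_0 \le P_1 \le \cdots \le N_{\ell-1} = G$ (assuming $\O{p'}{G}=1$ we may take $P_0 = N_0 = 1$ only if $\O{p'}{G}=1$; in general $N_0 = \O{p'}{G}$). One shows that $G_p \cap P_1, G_p \cap P_2, \ldots$ is a chain whose successive quotients each ``use up'' at least one derived length, by the fact that $G_p/(G_p\cap N_i)$ acts faithfully on $P_{i+1}/(N_i \Phi)$-pieces — so $G_p^{(i)} \le G_p \cap P_{i+1} \cdot (\text{stuff})$ is forced down the series, ultimately giving $G_p^{(\ell-1)}\cap (\text{top } p\text{-factor}) \ne 1$ needs $d(G_p)\ge \ell$.

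The main obstacle I expect is making precise the step ``each $p$-factor in the upper $p'p$-series contributes at least one to the derived length of $G_p$'' — i.e.\ that if $G_p$ acts faithfully (via conjugation, modulo the $p'$-part) on the successive $p$-layers, then the commutator $[P_{i+1}, G_p^{(i)}]$ is nontrivial unless $G_p^{(i)}$ already centralises that layer. This rests on the self-centralising property of $\O p {\bar G}$ in the quotient $\bar G$ where the $p'$-part below has been factored out, combined with the fact that a $p$-group acting faithfully on a $p$-group $Q$ cannot have its derived series terminate before the action ``unfolds'' — which is exactly where one inducts. I would structure the whole thing as a single induction on $|G|$ combining the two reductions (mod $\O{p'}{G}$ and mod $\O p G$) as above, so that the only genuinely new content at each step is the inclusion $G_p^{(d-1)} \subseteq \O p G$ when $\O{p'}{G}=1$, which follows because $G_p^{(d-1)}$ is an abelian characteristic subgroup of $G_p$ hence normalises a Sylow basis and, being a $p$-group normalised appropriately, lands in $\O p G$ by the self-centralising property — this last inclusion is the crux and I would prove it by showing $[\O{p',p}{G}, G_p^{(d-1)}] \le \O{p'}{G} = 1$ forces $G_p^{(d-1)} \le \cent{G}{\O p G} \le \O p G$.
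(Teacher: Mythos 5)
There is a genuine gap, and it sits exactly where you put the word ``crux''. Your induction reduces everything to the claim that, when $\O {p'}G=1$ and $d=d(G_p)$, one has $G_p^{(d-1)}\leq \O p G$; and your proposed justification is that $[\O{p}{G},G_p^{(d-1)}]\leq \O {p'}G=1$, which would force $G_p^{(d-1)}\leq \cent G{\O p G}\leq \O p G$. But nothing supports the asserted commutator relation: there is no reason whatsoever why the last nontrivial term of the derived series of a Sylow $p$-subgroup should centralise $\O p G$ (already inside a single nonabelian $p$-group $P=\O p P$ of derived length $2$, the abelian subgroup $P'$ need not be central), so the argument is circular where it is not simply false. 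More fundamentally, the inclusion $G_p^{(d-1)}\leq \O p G$ under $\O{p'}G=1$ is not a formal consequence of self-centralisation of the Fitting subgroup: statements of the shape ``an abelian subgroup normal (or characteristic) in $G_p$ lies in ${\bf O}_{p',p}(G)$'' are exactly the delicate point of Hall--Higman theory, they require $p$-stability-type hypotheses and fail to admit elementary proofs precisely when quadratic action occurs (e.g.\ for $p=2$ and for configurations involving $\mathrm{SL}(2,p)$). The remaining sketches in your proposal (the module $F/\Phi(F)$, the chain $G_p\cap P_i$ ``using up'' derived length) gesture at the correct classical strategy but are never carried out, and carrying them out is the actual content of the theorem.

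For calibration: the paper does not prove this statement at all; it quotes it. For $p$ odd it is Theorem~A~(i) of Hall and Higman, whose proof rests on their representation-theoretic Theorem~B about minimal polynomials of $p$-elements, and for $p=2$ it is a theorem of Brjuhanova from 1981, obtained only decades after the odd case and by substantial additional work. So a short induction of the kind you outline cannot be completed without importing that machinery; the honest options are either to cite these results (as the paper does) or to reproduce the Hall--Higman analysis of the action of $G/\O p G$ on $\O p G/\Phi(\O p G)$, including the treatment of the exceptional (quadratic-action) cases, which is far beyond the reductions you have written down.
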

\begin{proof}
When $p$ is odd, this is~\cite[Theorem~$A$~(i)]{HH}. The analogous result for $p=2$ is proved in~\cite{Br}.
\end{proof}

Kazarin~\cite{Ka} has proved Theorem~\ref{thrm2.2} for arbitrary sets  of primes $\pi$ with $2\notin \pi$. We state this generalisation in a form tailored to our needs.

\begin{theorem}\label{thrm2.3}
Let $G$ be a group and let $\pi$ be a set of primes. If $2\notin \pi$ or if $G_\pi$ is nilpotent, then $\ell_\pi(G)\leq d(G_\pi)$.
\end{theorem}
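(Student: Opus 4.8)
The statement falls into two cases, and I would treat them separately. When $2\notin\pi$, the inequality $\ell_\pi(G)\le d(G_\pi)$ is exactly the theorem of Kazarin~\cite{Ka} (this is precisely why the hypothesis is phrased with the alternative), so here there is nothing to do beyond citing~\cite{Ka} and noting that his assumptions are met. The content therefore lies in the case $2\in\pi$ with $G_\pi$ nilpotent.

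In that case, since $G_\pi$ is a finite nilpotent group it is the internal direct product $G_\pi=\prod_{q\in\pi}G_q$ of its Sylow subgroups, whence $d(G_\pi)=\max_{q\in\pi}d(G_q)$, and by Theorem~\ref{thrm2.2} we have $\ell_q(G)\le d(G_q)\le d(G_\pi)$ for every $q\in\pi$. Consequently it suffices to prove that $\ell_\pi(G)\le\max_{q\in\pi}\ell_q(G)$ whenever $G_\pi$ is nilpotent. I would establish this by taking a counterexample $G$ of least order. Then $\ell_\pi(G)>\ell_q(G)$ for every $q\in\pi$; in particular $\ell_\pi(G)\ge1$, so $G\neq1$, and we may discard from $\pi$ the primes not dividing $|G|$ without changing either side. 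Passing to $G/\O{\pi'}{G}$ preserves the nilpotency of a Hall $\pi$-subgroup, leaves $\ell_\pi$ unchanged, and does not increase any $\ell_q$; so by minimality $\O{\pi'}{G}=1$, hence $\F G=\O\pi{G}=\prod_{q\in\pi}\O q{G}=:F\neq1$ and $\cent G F\le F$.

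Now the nilpotency of $G_\pi$ enters. If $s\in\pi$ and $\O s{G}=1$, then $G_s$ centralises $G_q$ for every $q\neq s$, hence centralises $\prod_{q\neq s}\O q{G}=F$, so $G_s\le\cent G F\le F$ forces $G_s=1$. Thus $\O q{G}\neq1$ for every $q\in\pi$. If $|\pi|=1$ then $\ell_\pi(G)=\ell_q(G)$, a contradiction; so $|\pi|\ge2$. Fixing $p\in\pi$ and passing to $G/\O p{G}$ — where $\O p{G}$ is a proper $G$-invariant subgroup of the bottom $\pi$-layer $F$ — should leave $\ell_\pi$ unchanged while keeping a Hall $\pi$-subgroup nilpotent; minimality then gives $\ell_\pi(G)=\ell_\pi(G/\O p{G})\le\max_q\ell_q(G/\O p{G})\le\max_q\ell_q(G)$, the desired contradiction.

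The delicate point, and the main obstacle, is the very last step: one must check that killing a proper $G$-invariant subgroup of $\O\pi{G}$ does not lower $\ell_\pi(G)$. This is not automatic — quotients can decrease $\pi$-length — and it is precisely here that the nilpotency of $G_\pi$ must be used a second time, to exclude the possibility that the topmost $\pi$-layer fuses downward in $G/\O p{G}$. Controlling this will require analysing how $G$ acts on $F$ through the embedding $G/F\hookrightarrow\Aut F=\prod_{q\in\pi}\Aut\O q{G}$, in particular the behaviour of the normal subgroups $\cent G{\O q{G}}$. If one prefers to sidestep this, the alternative is simply to invoke Kazarin~\cite{Ka} for the nilpotent case directly, should it be available there in this generality.
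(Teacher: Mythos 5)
The case $2\notin\pi$ is handled exactly as in the paper (quote Kazarin), so the whole weight of the statement is the case $2\in\pi$ with $G_\pi$ nilpotent, and there your argument is not complete. Your reduction to Theorem~\ref{thrm2.2} rests on the claim $\ell_\pi(G)\le\max_{q\in\pi}\ell_q(G)$ for nilpotent $G_\pi$, and the proof you sketch of that claim breaks exactly where you say it does: the assertion that passing from a minimal counterexample $G$ to $G/\O p G$ leaves $\ell_\pi$ unchanged is not a routine verification, and you offer no argument for it. As a general statement (under all the normalisations you have already made, short of minimality) it is simply false. For example, take $\pi=\{3,5\}$ and $G=\bigl(C_3^3\rtimes(C_{13}\rtimes C_3)\bigr)\times C_5$, where $C_{13}$ is a Singer cycle acting irreducibly on $C_3^3$ and the top $C_3$ acts as the field automorphism of $\mathbb{F}_{27}$: then $\O{\pi'}G=1$, $\F G=\O\pi G=C_3^3\times C_5$ with $\O 3G\ne 1\ne\O 5G$, the Hall $\pi$-subgroup $(C_3^3\rtimes C_3)\times C_5$ is nilpotent, yet $\ell_\pi(G)=2$ while $\ell_\pi(G/\O 3G)=1$. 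So the quotient step can lose a whole $\pi$-layer, and rescuing it would require an essential (and currently unspecified) use of minimality; ``analysing how $G$ acts on $F$'' is precisely the missing proof, not a detail. Your proposed escape route also fails: Kazarin's theorem needs $2\notin\pi$, and the nilpotent clause of the statement only adds content when $2\in\pi$, so it cannot be invoked there.

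For comparison, the paper's own proof of the nilpotent case is a one-line appeal to Theorem~\ref{thrm2.2}; implicitly it uses the same bridge you isolated, namely that a nilpotent Hall $\pi$-subgroup forces $\ell_\pi(G)\le\max_{p\in\pi}\ell_p(G)$ (a classical fact, for which a natural starting point is that the $\pi$-layer $\O{\pi'\pi}G/\O{\pi'}G$ is nilpotent, so its Hall $p'$-subgroup is characteristic and hence $\O{\pi'\pi}G\le\O{p'p}G$ for every $p\in\pi$, together with your observation that $\O qG\ne1$ for all $q\in\pi$ when $\O{\pi'}G=1$). You identified the right statement to prove, but the proof of it is exactly what is missing, so the proposal has a genuine gap rather than being a complete alternative argument.
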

\begin{proof}
When $2\notin \pi$, this is the main result of~\cite{Ka} (see also~\cite[Theorem~$1.7.20$]{BB}). When $G_\pi$ is nilpotent, the proof follows from Theorem~\ref{thrm2.2}. 
\end{proof}

\section{Our toolkit: towers}\label{section3}
We start this section with a pivotal definition introduced by Turull~\cite{Tu}. (The definition of $B$-\textit{tower} in~\cite[Definition~$1.1$]{Tu} is actually more general then the one we give here and coincides with ours when $B=1$.)

\begin{definition}\label{Ttower}{\rm 
Let $G$ be a group. A family $\mathfrak{T} := (P_i\mid i\in \{1,\ldots,h\})$ is said to be a \textit{tower of length} $h$ of $G$ if the following are satisfied.
\begin{enumerate}
\item $P_i$ is a $p_i$-subgroup of $G$ and $p_i\in \pi(G)$.
\item If $1\leq i\leq j\leq h$, then $P_i$ normalises $P_j$.
\item Define inductively $\overline{P_h}:=P_h$, and $\overline{P_i}:=P_i/\cent {P_i}{\overline{P_{i+1}}}$ for  $i\in \{1,\ldots,h-1\}$. Then $\overline{P_i}\neq 1$, for every $i\in \{1,\ldots,h\}$.
\item $p_i\neq p_{i+1}$, for every $i\in\{1,\ldots,h-1\}$.
\end{enumerate}
}
\end{definition}

 A concept that resembles the definition of tower was orinigally introduced by Dade in~\cite{Dade} for investigating the Fitting height of a group.  
The relationship between Fitting height and towers was uncovered by Turull.
\begin{lemma}[{{\cite[Lemma~$1.9$]{Tu}}}]\label{lemma31}Let $G$ be a group. Then
\[h(G)=\max\{h\mid G \textrm{ admits a tower of length }h\}.\]
\end{lemma}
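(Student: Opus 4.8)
The plan is to prove the equivalence $h(G)=\max\{h\mid G\textrm{ admits a tower of length }h\}$ by establishing the two inequalities separately. For the inequality $h(G)\ge h$ whenever $G$ admits a tower $(P_i\mid i\in\{1,\dots,h\})$, I would argue by induction on $h$ that the existence of such a tower forces $\FF{h}{G}$ to be all of $G$; equivalently, I would show that a group of Fitting height $k$ cannot admit a tower of length $k+1$. The key observation is that $P_1\le \F G$ forces a contradiction only if the whole tower lives inside a smaller Fitting layer, so one wants to bound the ``position'' of each $P_i$ in the Fitting series. Concretely, I would prove by downward induction on $i$ that $P_i$ centralises $\FF{?}{G}$-quotients appropriately; the cleanest formulation is: if $G$ has Fitting height $k$, then for any tower the factor $\overline{P_i}$ must act nontrivially, which is incompatible with $i>k$ because $\F{G}$ acts trivially on every chief factor above it. I expect the clean way to package this is: restrict the tower to $\F G$ and to $G/\F G$, using that $P_1\cap \F G$ and the image of the tail form towers of controlled length, then invoke the inductive hypothesis $h(G/\F G)=h(G)-1$.

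For the reverse inequality $h(G)\le \max\{h\mid G\textrm{ admits a tower of length }h\}$, I would construct, by induction on $h(G)$, a tower of length exactly $h(G)$. Write $h:=h(G)$ and consider the Fitting series $1=\FF0G<\FF1G<\dots<\FF hG=G$. The idea is to pick primes $p_h,p_{h-1},\dots,p_1$ and Sylow-type subgroups $P_i$ in successive layers so that each $\overline{P_i}$ is nontrivial. Starting at the top, since $G/\FF{h-1}{G}$ is a nontrivial nilpotent group, pick a prime $p_h$ and a Sylow $p_h$-subgroup of this quotient, pull it back and intersect with $\FF hG$ to get $P_h$. Then, working downward, at stage $i$ one has already built $\overline{P_{i+1}}$ and wants $P_i$ inside layer $i$ (modulo layer $i-1$) that does not centralise $\overline{P_{i+1}}$; such a $P_i$ exists because otherwise $\F{G/\FF{i-1}G}$ — or the relevant subquotient — would centralise the already-chosen tail, which would collapse the Fitting height. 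One must also take care to choose the $P_i$'s compatibly (using a Sylow basis, so that $P_i$ normalises $P_j$ for $i\le j$) and to ensure $p_i\neq p_{i+1}$, which can be arranged because a $p$-group acting on a $p$-group always has nontrivial fixed points, so the ``new'' prime at each step can be taken different from the previous one.

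The main obstacle, and the place requiring genuine care rather than routine bookkeeping, is the downward construction in the second inequality: one must produce at each layer a subgroup $P_i$ that genuinely acts nontrivially on the tail $\overline{P_{i+1}}$, and simultaneously keep the normalising condition (2) and the distinct-prime condition (4). The standard device here is to fix a Sylow basis $\{G_p\}_{p\in\pi(G)}$ (available by Hall's theorem, as recalled in Section~\ref{1.1}) and to choose each $P_i$ as a subgroup of one of the $G_{p_i}$'s adapted to the Fitting layer, which automatically gives the permutability and hence the normalising relations; the nontriviality of the action is then a coprime-action fixed-point argument combined with the minimality built into the Fitting series. I would present this as a lemma extracted from Turull's paper and, since the excerpt allows me to assume earlier-stated results, I would in fact simply cite \cite[Lemma~$1.9$]{Tu} for the full statement and include only the short argument above as motivation — but if a self-contained proof is wanted, the two-inequality induction scheme just described is the route I would take.
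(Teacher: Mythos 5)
The paper gives no internal proof of this lemma: it is stated with the attribution \cite[Lemma~$1.9$]{Tu} and used as a black box, which is exactly what you settle on at the end of your proposal. Your two-inequality induction sketch is a reasonable outline of the standard Dade--Turull argument, but since the genuinely delicate step --- producing at each layer a $P_i$ with a new prime $p_i\neq p_{i+1}$ acting nontrivially on the iterated quotient $\overline{P_{i+1}}$, while keeping the normalising condition --- is only gestured at, the citation is doing the real work, just as it does in the paper.
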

In view of Lemma~\ref{lemma31} we give the following:
\begin{definition}\label{Fittingtower}{\rm We say that the tower $\mathfrak{T}$ of $G$ a \textit{Fitting tower} if $\mathfrak{T}$ has length $h(G)$.}
\end{definition}



The following is an easy consequence of~\cite[Lemma~$1.5$]{Tu}. 
For simplifying the  notation, given a $p$-group $P$, we write $\pi^*(P)=p$ when $P\neq 1$, and $\pi^*(P)=1$ when $P=1$. Observe that when $P\neq 1$ we have $\pi(P)=\{\pi^*(P)\}$.

\begin{lemma}\label{lemma33}
Let $G$ be a group, let $\mathfrak{T}=(P_i\mid i\in \{1,\ldots,h\})$ be a tower of $G$,  let $j\in \{1,\ldots,h\}$, let $s\geq 0$ be an integer and let $\mathfrak{T}'=(P_i\mid i\in \{1,\ldots,h\}\setminus\{j,j+1,\ldots,j+s-1,j+s\})$. Then either $\mathfrak{T}'$ is a tower of $G$, or $1<j\leq 
j+s<h$ and $\pi^*(P_{j-1})=\pi^*(P_{j+s+1})$.
\end{lemma}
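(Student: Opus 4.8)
The plan is to prove this by inspecting which of the four defining conditions of a tower can fail for $\mathfrak{T}'$, using the fact that the underlying primes and normalising relations of the retained $P_i$ are inherited automatically. Conditions (1) and (2) of Definition~\ref{Ttower} are obviously preserved by passing to a subfamily, since each $P_i$ is still a $p_i$-subgroup of $G$ and the normalising relations among the surviving $P_i$ are a subset of the original ones. So only conditions (3) and (4) are at stake. First I would reduce to the case $s=0$ and then iterate: if we remove the entries one at a time, at each step we are in the situation of \cite[Lemma~$1.5$]{Tu}, which handles the deletion of a single term $P_j$. However, because the order of deletions matters for the bookkeeping of which ``$\overline{P_i}$'' is being formed, it is cleaner to delete the whole block $\{P_j,\dots,P_{j+s}\}$ at once and argue directly.

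Next I would treat condition (4), the alternation of primes. After deleting the block, the only new adjacency created is between $P_{j-1}$ and $P_{j+s+1}$ (when both exist, i.e. when $1<j$ and $j+s<h$). If $\pi^*(P_{j-1})\neq\pi^*(P_{j+s+1})$, condition (4) holds for $\mathfrak{T}'$; if they are equal, we are precisely in the stated exceptional conclusion. If instead $j=1$ or $j+s=h$, no new adjacency is created, so (4) is automatic. Thus the only way (4) can fail is exactly the exception in the statement, and we may henceforth assume $\pi^*(P_{j-1})\neq\pi^*(P_{j+s+1})$ (or that one of $P_{j-1},P_{j+s+1}$ is absent) and aim to show $\mathfrak{T}'$ is genuinely a tower.

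The heart of the argument is condition (3): we must check that the newly defined quotients $\overline{P_i}$ for the family $\mathfrak{T}'$ are still nontrivial. For indices $i\ge j+s+1$ (or $i>j+s$ with $i$ in $\mathfrak{T}'$) nothing changes: the ``tail'' of the tower from $P_{j+s+1}$ onward is untouched, so those $\overline{P_i}$ are the same as in $\mathfrak{T}$ and hence nonzero. For indices $i<j$ other than $i=j-1$, likewise nothing changes because $\overline{P_i}$ depends only on $P_i,\dots$ and the tail, and the tail relevant to those indices (namely $\overline{P_{i+1}},\dots,\overline{P_{j-1}}$) is unchanged. The single delicate index is $i=j-1$: in $\mathfrak{T}$ we have $\overline{P_{j-1}}=P_{j-1}/\cent{P_{j-1}}{\overline{P_j}}$, whereas in $\mathfrak{T}'$ we need $P_{j-1}/\cent{P_{j-1}}{\overline{P_{j+s+1}}'}\ne 1$, where $\overline{P_{j+s+1}}'$ is the quotient of $P_{j+s+1}$ formed inside $\mathfrak{T}'$ — but that quotient equals the corresponding one in $\mathfrak{T}$, call it $\overline{P_{j+s+1}}$. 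So what must be shown is: $P_{j-1}$ does not centralise $\overline{P_{j+s+1}}$. This is exactly the content one extracts from \cite[Lemma~$1.5$]{Tu}: that Lemma says that for a tower, consecutive ``skips'' are controlled, and the only obstruction to $P_{j-1}$ acting nontrivially on the quotient further down is the coincidence of the two flanking primes. I would invoke \cite[Lemma~$1.5$]{Tu} in the form that gives, when $\pi^*(P_{j-1})\ne\pi^*(P_{j+s+1})$, that $\overline{P_{j-1}}$ computed relative to the shortened tail is still nonzero; the coprimality $p_{j-1}\ne p_{j+s+1}$ together with coprime action arguments (a $p'$-group acting on a $p$-group, nontrivially on some section) forces the commutator $[\overline{P_{j+s+1}},P_{j-1}]$ to be nonzero.

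The main obstacle I anticipate is precisely this last point: cleanly deducing from \cite[Lemma~$1.5$]{Tu} that dropping an entire block (not just one entry) preserves nontriviality of the ``glued'' quotient $\overline{P_{j-1}}$, keeping careful track of the fact that the quotients $\overline{P_i}$ for $i\ge j+s+1$ are literally unchanged so that ``$\cent{P_{j-1}}{\overline{P_{j+s+1}}}$'' means the same thing in both families. If \cite[Lemma~$1.5$]{Tu} is stated only for a single deletion, I would formalise the induction on $s$: delete $P_{j+s}$ first (the hypothesis of \cite[Lemma~$1.5$]{Tu} applied to index $j+s$ either yields a tower or gives $\pi^*(P_{j+s-1})=\pi^*(P_{j+s+1})$, i.e. a repeated prime that we can absorb), and repeat, being careful that the exceptional ``repeated prime'' case, when it arises mid-induction, still collapses to the single stated exception $\pi^*(P_{j-1})=\pi^*(P_{j+s+1})$ once all of $P_j,\dots,P_{j+s}$ are gone. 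Aside from this bookkeeping, everything else is routine verification of Definition~\ref{Ttower}(1)--(4) for the subfamily.
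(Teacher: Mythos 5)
Your handling of conditions~(1), (2) and~(4) of Definition~\ref{Ttower} is fine, and the identification of the exceptional case with the new adjacency between $P_{j-1}$ and $P_{j+s+1}$ is exactly right. The gap is in what you call the heart of the argument, condition~(3). You invoke \cite[Lemma~$1.5$]{Tu} ``in the form that gives, when $\pi^*(P_{j-1})\ne\pi^*(P_{j+s+1})$, that $\overline{P_{j-1}}$ computed relative to the shortened tail is still nonzero'', and you back this up with ``coprime action arguments'' claiming that $p_{j-1}\ne p_{j+s+1}$ forces $[\overline{P_{j+s+1}},P_{j-1}]\neq 1$. That is not a proof: a coprime action can perfectly well be trivial, so distinctness of the two flanking primes gives you nothing towards nontriviality, and indeed it is irrelevant to condition~(3). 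The actual content of \cite[Lemma~$1.5$]{Tu} --- and this is precisely how the paper uses it --- is that \emph{any} subfamily of a tower indexed by an increasing function satisfies conditions~(1)--(3), with no hypothesis on the primes at all; the primes enter only through condition~(4), which is where the stated dichotomy comes from. If you do not want to quote that lemma in its full strength, you must supply the commutator argument behind it (in the spirit of Lemma~\ref{elementary} of the paper): if $P_{j-1}$ centralised the relevant bar-quotient further down the shortened tail, then, using the Three Subgroups Lemma and the normalisation in Definition~\ref{Ttower}~(2), one gets $[P_{j-1},P_j]\leq \cent{P_j}{\overline{P_{j+1}}}$, so $P_{j-1}$ would centralise $\overline{P_j}$, contradicting $\overline{P_{j-1}}\neq 1$ in $\mathfrak{T}$. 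Nothing of this kind appears in your proposal.

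A second, related error: you assert that for indices $i<j-1$ ``nothing changes'' because the relevant tail is unchanged. That is false. In $\mathfrak{T}'$ the quotient at index $j-1$ is $P_{j-1}/\cent{P_{j-1}}{\overline{P_{j+s+1}}}$, which in general differs from $\overline{P_{j-1}}=P_{j-1}/\cent{P_{j-1}}{\overline{P_j}}$; consequently the bar-quotients of \emph{all} indices $i\leq j-1$ are recomputed, and nontriviality has to be re-established for each of them, not just at $i=j-1$. Again, the strong form of \cite[Lemma~$1.5$]{Tu} handles all of these at once. Finally, your fallback induction on $s$ (deleting one entry at a time) is left unresolved: when the mid-induction exceptional case $\pi^*(P_{j+s-1})=\pi^*(P_{j+s+1})$ arises, the intermediate family is not a tower, so the one-step lemma can no longer be applied, and ``absorbing the repeated prime'' is not explained. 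As it stands, the proposal has a genuine gap at the one point where the lemma is not routine.
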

\begin{proof}
Lemma~$1.5$ in~\cite{Tu} says that, for every $h_0$ with $1\leq h_0\leq h$ and for every increasing function $f:\{1,\ldots,h_0\}\to \{1,\ldots,h\}$, the family
$(P_{f(i)}\mid i\in \{1,\ldots,h_0\})$ satisfies the conditions ~$(1)$,~$(2)$ and~$(3)$ in Definition~$\ref{Ttower}$. Applying this with  $h_0:=h-s-1$ and with $f:\{1,\ldots,h_0\}\to \{1,\ldots,h\}$  defined by 
\[
f(i)=\begin{cases}
i&\textrm{if }1\leq i< j,\\
i+s+1&\textrm{if }j\leq i\leq h_0,
\end{cases}
\]
we obtain  that $\mathfrak{T}'$ satisfies the conditions~$(1)$,~$(2)$ and~$(3)$ of Definition~\ref{Ttower}. As $\mathfrak{T}$ satisfies Definition~\ref{Ttower}~$(4)$, we immediately get that either $\mathfrak{T'}$  satisfies also~$(4)$ (and hence is a tower of $G$), or $1<j\leq j+s<h$ and $\pi^*(P_{j-1})=\pi^*(P_{j+s+1})$. 
\end{proof}

\begin{definition}{\rm 
Let $G$ be a group, let $\mathfrak{T}=(P_i\mid i\in \{1,\ldots,h\})$ be a tower of $G$ and let $\sigma$ be a set of primes. We set $$\nu_\sigma(\mathfrak{T}):=|\{i\in \{1,\ldots,h\}\mid \pi^*(P_i)\in \sigma\}|.$$ Clearly, $\nu_\sigma(\mathfrak{T})=0$ when $\sigma$ has no element in common with $\{\pi^*(P_1),\ldots,\pi^*(P_h)\}$.

Now, set $P_0:=1$ and $P_{h+1}:=1$. For $i,j\in \{1,\ldots,h\}$ with $i\leq j$, the sequence $(P_\ell\mid i\leq \ell\leq j)$ of consecutive elements of $\mathfrak{T}$ is said to be a $\sigma$-\textit{block} if  
\begin{itemize}
\item $\pi^*(P_{i+s})\in \sigma$ for every $s$ with $0\leq s\leq j-i$, and
\item $\pi^*(P_{i-1})\notin\sigma$, $\pi^*(P_{j+1})\notin \sigma$.
\end{itemize}
Moreover, we denote by $\beta_\sigma(\mathfrak{T})$ the number of $\sigma$-blocks of $\mathfrak{T}$.}
\end{definition}

The main result of this section is  Lemma~\ref{lemma34}: before proceeding to its proof we single out  two basic observations.

\begin{lemma}\label{elementary}
Let $\mathfrak{T}=(P_i\mid i\in \{1,\ldots,h\})$ be a tower of $G$. Then, for $j\in \{1,\ldots,h-1\}$, we have $\cent {P_j}{P_h}\le \cent {P_j}{\overline{P_{j+1}}}$.
\end{lemma}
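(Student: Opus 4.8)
The plan is to prove the inclusion by downward induction on $j\in\{1,\dots,h-1\}$. The base case is $j=h-1$, where it is trivial: there $\overline{P_{j+1}}=\overline{P_h}=P_h$ by definition, so the claimed inclusion $\cent{P_{h-1}}{P_h}\le\cent{P_{h-1}}{\overline{P_h}}$ is in fact an equality.

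For the inductive step I would fix $j$ with $1\le j<h-1$ and an element $x\in\cent{P_j}{P_h}$, and show that $x$ acts trivially on the section $\overline{P_{j+1}}=P_{j+1}/\cent{P_{j+1}}{\overline{P_{j+2}}}$; that is, $[x,P_{j+1}]\le\cent{P_{j+1}}{\overline{P_{j+2}}}$. Since $j+1$ lies in $\{1,\dots,h-1\}$, the inductive hypothesis applied to the index $j+1$ gives $\cent{P_{j+1}}{P_h}\le\cent{P_{j+1}}{\overline{P_{j+2}}}$, so it is enough to establish the stronger assertion $[x,P_{j+1}]\le\cent{P_{j+1}}{P_h}$, equivalently that $x$ centralises the section $P_{j+1}/\cent{P_{j+1}}{P_h}$.

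This reduced assertion is a routine commutator computation resting only on parts (1) and (2) of Definition~\ref{Ttower}. As $j\le j+1\le h$, the subgroup $P_j$ — and in particular $x$ — normalises $P_{j+1}$ and $P_h$, while $P_{j+1}$ normalises $P_h$. Fix $y\in P_{j+1}$; then for every $z\in P_h$ one has $z^{y^{-1}}\in P_h$, and this element is fixed by $x$, so a direct rearrangement of conjugates gives $z^{x^y}=\bigl(z^{y^{-1}}\bigr)^{xy}=\bigl(z^{y^{-1}}\bigr)^{y}=z$. Hence $x^y$ centralises $P_h$ as well, so $[x,y]=x^{-1}x^y\in\cent{G}{P_h}$. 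Since moreover $[x,y]\in[P_j,P_{j+1}]\le P_{j+1}$ (because $P_j$ normalises $P_{j+1}$), we obtain $[x,y]\in\cent{P_{j+1}}{P_h}$ for every $y\in P_{j+1}$. A similar, easier computation, using again that $x$ centralises $P_h$, shows that $\cent{P_{j+1}}{P_h}$ is $\langle x\rangle$-invariant; being also normal in $P_{j+1}$, it is normal in $\langle x\rangle P_{j+1}$, whence $[\langle x\rangle,P_{j+1}]\le\cent{P_{j+1}}{P_h}$. This proves the reduced assertion and closes the induction.

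I expect the only real care needed to be bookkeeping: one must keep track of the nested sections $\overline{P_i}$ and of the centralisers $\cent{P_i}{\overline{P_{i+1}}}$, checking that each quotient written above is meaningful and $\langle x\rangle$-invariant — but this layered structure is exactly what the well-definedness of Definition~\ref{Ttower}(3) already presupposes, so nothing new is required. An alternative route, avoiding the induction, would be to first unravel $\cent{P_j}{\overline{P_{j+1}}}=\{y\in P_j\mid [y,P_{j+1},P_{j+2},\dots,P_h]=1\}$ directly from Definition~\ref{Ttower}(3), and then apply the Three Subgroups Lemma repeatedly to the chain $[x,P_h]=1$, $[x,P_{j+1},P_h]=1$, $[x,P_{j+1},P_{j+2},P_h]=1$, and so on; the downward induction above seems the tidier write-up.
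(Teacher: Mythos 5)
Your proposal is correct and is essentially the paper's argument: the same induction on $h-j$ with base case $j=h-1$, the same reduction to showing $[\cent {P_j}{P_h},P_{j+1}]\le \cent {P_{j+1}}{P_h}$, and then the inductive hypothesis at index $j+1$ to pass to $\cent {P_{j+1}}{\overline{P_{j+2}}}$. The only difference is in that middle inclusion, which the paper obtains from the Three Subgroups Lemma (via $[R,P_h,P_{j+1}]=1$ and $[P_h,P_{j+1},R]=1$ for $R=\cent {P_j}{P_h}$), whereas you verify it by a direct conjugation computation — a cosmetic variation you yourself note is interchangeable with the Three Subgroups route.
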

\begin{proof}
We argue by induction on $h-j$. If $j=h-1$, then $\overline{P_h}=P_h$ and hence there is nothing to prove. Suppose $h-j>1$ and set $R:=\cent {P_j}{P_h}$. We have $[R,P_h,P_{j+1}]=1$, and also $[P_h,P_{j+1},R]\leq [P_h,R]=1$ by Definition~\ref{Ttower}~$(2)$. Thus the Three Subgroups Lemma yields $[P_{j+1},R,P_h]=1$, that is, $[P_{j+1},R]\leq \cent {P_{j+1}}{P_h}$. Now the inductive hypothesis gives $[P_{j+1},R]\leq \cent {P_{j+1}}{\overline{P_{j+2}}}$, and hence $[\overline{P_{j+1}},R]=1$. Therefore $\cent {P_j}{P_h}=R\leq \cent {P_j}{\overline{P_{j+1}}}$.
\end{proof}

\begin{lemma}\label{elementaryy}
Let $\mathfrak{T}=(P_i\mid i\in \{1,\ldots,h\})$ be a tower of $G$ and let $N$ be a normal subgroup of $G$ with 
\begin{equation}\label{eq1}
P_j\cap N\leq \cent {P_j}{P_h},
\end{equation}
for every $j\in \{1,\ldots,h-1\}$. Then $\mathfrak{T}':=(P_iN/N\mid i\in 1,\ldots,h-1\})$ is a tower of $G/N$.
\end{lemma}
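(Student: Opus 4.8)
The plan is to verify that $\mathfrak{T}'=(P_iN/N\mid i\in\{1,\ldots,h-1\})$ satisfies the four conditions of Definition~\ref{Ttower}. Throughout, write $\overline{G}:=G/N$ and, for a subgroup $H\le G$, write $\overline{H}:=HN/N$; there should be no confusion with the bar notation of Definition~\ref{Ttower}~$(3)$ since the quotients $\overline{P_i}$ of that definition will always be named explicitly. Conditions $(1)$ and $(4)$ are immediate: $\overline{P_i}=P_iN/N$ is a $p_i$-group (a homomorphic image of the $p_i$-group $P_i$), with $p_i\in\pi(G)$, and since $\mathfrak{T}$ satisfies $(4)$ we have $p_i\ne p_{i+1}$ for $i\in\{1,\ldots,h-2\}$. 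Condition $(2)$ is also routine: if $i\le j\le h-1$ then $P_i$ normalises $P_j$, hence $\overline{P_i}$ normalises $\overline{P_j}$ in $\overline{G}$.

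The only real content is condition $(3)$. Define the quotients of $\mathfrak{T}'$ inductively by $Q_{h-1}:=\overline{P_{h-1}}$ and $Q_i:=\overline{P_i}/\cent{\overline{P_i}}{Q_{i+1}}$ for $i\in\{1,\ldots,h-2\}$; we must show $Q_i\ne 1$ for all $i$. The key claim I would establish, by downward induction on $j\in\{1,\ldots,h-1\}$, is that the natural surjection $P_j\to\overline{P_j}$ carries $\cent{P_j}{\overline{P_{j+1}}}$ \emph{onto} $\cent{\overline{P_j}}{Q_{j+1}}$ — equivalently, that $\overline{P_j}/\cent{\overline{P_j}}{Q_{j+1}}$ is a nontrivial quotient of $\overline{P_j}=P_jN/N$, which is itself a quotient of $\overline{P_j}$ (the Definition~\ref{Ttower}~$(3)$ quotient). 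Granting this, $Q_j$ is a quotient of $\overline{P_j}$, and since $\overline{P_j}\ne 1$ by Definition~\ref{Ttower}~$(3)$ applied to $\mathfrak{T}$, we would be done provided the quotient is a \emph{proper} one no larger than forced — more carefully, I want $Q_j\cong \overline{P_j}$ as soon as the kernel $\cent{P_j}{\overline{P_{j+1}}}$ already contains $P_j\cap N$. Here is where hypothesis~\eqref{eq1} enters: for $j\le h-1$ we have $P_j\cap N\le\cent{P_j}{P_h}$, and by Lemma~\ref{elementary} $\cent{P_j}{P_h}\le\cent{P_j}{\overline{P_{j+1}}}$ (the Definition~\ref{Ttower}~$(3)$ centraliser), so $P_j\cap N\le\cent{P_j}{\overline{P_{j+1}}}$. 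Consequently the composite $P_j\to\overline{P_j}\to \overline{P_j}$ has the same kernel as $P_j\to\overline{P_j}$ up to the part inside $\cent{P_j}{\overline{P_{j+1}}}$, and a short diagram chase identifies $\overline{P_j}$ with the image of $\overline{P_j}$ in $\overline{G}$, hence $Q_{h-1}=\overline{P_{h-1}}\cong\overline{P_{h-1}}\ne1$ and, inductively, $\cent{\overline{P_j}}{Q_{j+1}}$ is the image of $\cent{P_j}{\overline{P_{j+1}}}$ so that $Q_j\cong\overline{P_j}\ne1$.

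To spell out the inductive step of that claim: assume $\cent{\overline{P_{j+1}}}{Q_{j+2}}$ is the image of $\cent{P_{j+1}}{\overline{P_{j+2}}}$ in $\overline{G}$ (vacuous refinement when $j+1=h-1$), so that $Q_{j+1}$ is the image of $\overline{P_{j+1}}$. An element $\overline{x}\in\overline{P_j}$ centralises $Q_{j+1}$ iff $[x,P_{j+1}]\subseteq \cent{P_{j+1}}{\overline{P_{j+2}}}\cdot N$ together with $x$ centralising the $N$-part, which by the normality of $N$ and coprimality-type arguments reduces to $x\in\cent{P_j}{\overline{P_{j+1}}}\cdot(P_j\cap N)$; but $P_j\cap N\le\cent{P_j}{\overline{P_{j+1}}}$ by the previous paragraph, so $\cent{\overline{P_j}}{Q_{j+1}}$ is exactly the image of $\cent{P_j}{\overline{P_{j+1}}}$, and $Q_j=\overline{P_j}/\cent{\overline{P_j}}{Q_{j+1}}$ is the image in $\overline{G}$ of $\overline{P_j}=P_j/\cent{P_j}{\overline{P_{j+1}}}\ne1$.

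The main obstacle I anticipate is this last identification — showing that ``passing to $G/N$ commutes with forming the tower quotients $\overline{P_i}$'', i.e.\ that $\cent{\overline{P_j}}{Q_{j+1}}$ is precisely the image of $\cent{P_j}{\overline{P_{j+1}}}$ rather than something strictly larger. Proving ``$\supseteq$'' is trivial; the reverse inclusion is exactly what hypothesis~\eqref{eq1} (via Lemma~\ref{elementary}) is designed to furnish, by guaranteeing that the piece of the centraliser of $Q_{j+1}$ coming from elements of $N$ was already inside $\cent{P_j}{\overline{P_{j+1}}}$. Once that commutation is in hand, nontriviality $Q_j\ne1$ follows for free from $\overline{P_j}\ne1$, and conditions $(1)$, $(2)$, $(4)$ are formalities, completing the proof that $\mathfrak{T}'$ is a tower of $G/N$.
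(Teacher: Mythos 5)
Your handling of conditions $(1)$, $(2)$, $(4)$ is fine, and the computation you call the ``hard direction'' is essentially the paper's argument; but the identity your induction is built on is false, and with it the claimed isomorphisms. You assert that $\cent{P_jN/N}{Q_{j+1}}$ is \emph{exactly} the image of $R_j:=\cent{P_j}{\overline{P_{j+1}}}$, calling the inclusion ``$\supseteq$'' trivial. It is not trivial, and in general it fails: for the image of $R_j$ to centralise $Q_{j+1}$ one would need $[R_j,P_{j+1}]$ to lie in the preimage of the kernel used to form $Q_{j+1}$, and that kernel is only \emph{contained} in $R_{j+1}N/N$, not equal to it. At the top level it is trivial, so for $j=h-2$ your equality demands $[\cent{P_{h-2}}{\overline{P_{h-1}}},P_{h-1}]\leq N$; nothing in Definition~\ref{Ttower} or hypothesis~\eqref{eq1} forces this. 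Indeed, taking $N=1$ (for which \eqref{eq1} is vacuous), your claim would read $\cent{P_{h-2}}{P_{h-1}}=\cent{P_{h-2}}{P_{h-1}/\cent{P_{h-1}}{P_h}}$, which is false for general towers. For the same reason $Q_{h-1}=P_{h-1}N/N\cong P_{h-1}/(P_{h-1}\cap N)$ need not be isomorphic to $\overline{P_{h-1}}=P_{h-1}/\cent{P_{h-1}}{P_h}$; it merely surjects onto it. So the inductive hypothesis you state (``the centraliser downstairs is precisely the image of the centraliser upstairs'') is not available, and the phrases ``$x$ centralising the $N$-part'' and ``coprimality-type arguments'' do not correspond to valid steps: centralising $Q_{j+1}$ is purely a commutator condition modulo the relevant kernel.

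The repair is exactly the paper's proof, and your sketch already contains its key step: prove by downward induction only the one-sided containment $\cent{P_jN/N}{Q_{j+1}}\leq R_jN/N$ (the paper's $L_j\leq R_j$). If $xN$ centralises $Q_{j+1}$, then $[x,P_{j+1}]\leq P_{j+1}\cap L_{j+1}N\leq P_{j+1}\cap R_{j+1}N=R_{j+1}(P_{j+1}\cap N)=R_{j+1}$, using the inductive containment $L_{j+1}\leq R_{j+1}$ and the fact that \eqref{eq1} together with Lemma~\ref{elementary} gives $P_{j+1}\cap N\leq\cent{P_{j+1}}{\overline{P_{j+2}}}=R_{j+1}$; hence $x\in R_j$. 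This containment, combined with $P_jN/R_jN\cong P_j/R_j(P_j\cap N)=P_j/R_j=\overline{P_j}\neq 1$ (again using $P_j\cap N\leq R_j$), shows that $Q_j$ maps \emph{onto} $\overline{P_j}$ and is therefore nontrivial --- no isomorphism is needed or true in general. As written, your argument rests on a false equality; weakened to the surjection statement it becomes correct and coincides with the paper's proof.
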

\begin{proof}
From~\eqref{eq1} and Lemma~\ref{elementary}, we have $P_j\cap N\leq \cent {P_j}{\overline{P_{j+1}}}$ for $j<h$. Set $R_h:=1$, and set $R_j:=\cent {P_j}{\overline{P_{j+1}}}$ for $j<h$. Thus $\overline{P_j}=P_j/R_j$, for every $j$. 

Now, for $j<h$, we have
$$ P_j\cap N= R_j\cap N$$ and hence
\begin{equation}\label{Pjbar}
\frac{P_jN}{R_jN}=\frac{P_j(R_jN)}{R_jN}\cong \frac{P_j}{P_j\cap R_jN}=\frac{P_j}{R_j(P_j\cap N)}=\frac{P_j}{R_j(R_j\cap N)}=\frac{P_j}{R_j}=\overline{P_j}.
\end{equation}

For each $j\in \{1,\ldots,h-1\}$, set $Q_j:=P_jN/N$, and define $\overline{Q_{h-1}}:=Q_{h-1}$, and $\overline{Q_j}:=Q_j/\cent {Q_j}{\overline{Q_{j+1}}}$ for $j<h-1$. In particular, for each $j\in \{1,\ldots,h-2\}$, there exists $L_j\leq P_j$ with $\cent {Q_j}{\overline{Q_{j+1}}}=L_jN/N$. Moreover, set $L_{h-1}:=1$. 

We show (by induction on $h-j$) that $L_j\leq R_j$, for each $j\in \{1,\ldots,h-1\}$. If $h-j=1$, then $L_j=L_{h-1}=1\leq R_{h-1}=R_j$. Assume then that $h-j>1$ and let $x\in L_{j}$. As $[xN,\overline{Q_{j+1}}]=1$, we get $[xN,Q_{j+1}]\leq \cent {Q_{j+1}}{\overline{Q_{j+2}}}=L_{j+1}N/N$ when $h-j>2$, and $[xN,Q_{j+1}]=1$ when $h-j=2$.  In both cases, applying the inductive hypothesis, we obtain
$$[x,Q_{j+1}]\leq \frac{L_{j+1}N}{N}\leq \frac{R_{j+1}N}{N}.$$
This gives $$[x,P_{j+1}]\leq P_{j+1}\cap R_{j+1}N=R_{j+1}(P_{j+1}\cap N).$$ Combining~\eqref{eq1}, Lemma~\ref{elementary} and the definition of $R_{j+1}$, we have $P_{j+1}\cap N\leq \cent {P_{j+1}}{P_h}\leq \cent {P_{j+1}}{\overline{P_{j+2}}}=R_{j+1}$. Therefore $[x,P_{j+1}]\leq R_{j+1}$ and hence $x\in \cent {P_{j}}{P_{j+1}/R_{j+1}}=\cent {P_j}{\overline{P_{j+1}}}=R_j$. Thus $L_j\leq R_j$ and the induction is proved.

Observe that
\begin{equation}\label{Qjbar}
\overline{Q_j}=\frac{P_jN/N}{L_{j}N/N}\cong \frac{P_jN}{L_jN}.
\end{equation}
As $L_jN\le R_jN \le P_jN$, from~\eqref{Pjbar} and~\eqref{Qjbar}, we see that
$\overline{P_j}$ is an epimorphic image of $\overline Q_j$. Finally, since $\mathfrak{T}$ is a tower of $G$, it follows immediately that $\mathfrak{T}'$ is a tower of $G/N$. 
\end{proof}

Given a tower $\mathfrak{T}=(P_i\mid i\in \{1,\ldots,h\})$ and $j\in \{1,\ldots,h\}$, we set $T_j:=P_hP_{h-1}\cdots P_j$. Observe that from Definition~\ref{Ttower}~(2), we have $T_j\unlhd T_1$.
 
We are now ready to prove one of the main tools of our paper. 
\begin{lemma}\label{lemma34}
Let $G$ be a group, let $\sigma$ be a non-empty subset of $\pi(G)$, let $A$ be a Hall $\sigma$-subgroup of $G$ and let $\mathfrak{T}:=(P_i\mid i\in \{1,\ldots,h\})$ be a tower of $G$. Then
\begin{enumerate}
\item $h(A)\geq \nu_\sigma(\mathfrak{T})-\beta_\sigma(\mathfrak{T})+1$, and
\item $\ell_\sigma(G)\geq\beta_\sigma(\mathfrak{T})$.
\end{enumerate} 
\end{lemma}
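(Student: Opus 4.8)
The statement connects two combinatorial quantities attached to the tower $\mathfrak T$ — the count $\nu_\sigma(\mathfrak T)$ of $\sigma$-levels and the number $\beta_\sigma(\mathfrak T)$ of $\sigma$-blocks — with two structural invariants of $G$, namely $h(A)$ for the Hall $\sigma$-subgroup $A$ and $\ell_\sigma(G)$. The natural strategy is to extract, from the restriction of $\mathfrak T$ to its $\sigma$-levels, an actual tower living inside $A$ (this proves (1)), and separately to read off the $\sigma$-blocks of $\mathfrak T$ as the ``$\sigma$-steps'' of the upper $\sigma'\sigma$-series, forcing $\ell_\sigma(G)$ to be large (this proves (2)).

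For part (1): write $i_1<i_2<\cdots<i_\nu$ (with $\nu=\nu_\sigma(\mathfrak T)$) for the indices with $\pi^*(P_{i_k})\in\sigma$. By Lemma~\ref{lemma33} applied repeatedly — deleting the $\sigma'$-levels one maximal run at a time — the subfamily $(P_{i_1},\dots,P_{i_\nu})$ fails to be a tower only because of violations of Definition~\ref{Ttower}(4), i.e.\ consecutive surviving terms with equal prime. Each such ``bad adjacency'' $\pi^*(P_{i_k})=\pi^*(P_{i_{k+1}})$ occurs precisely when $i_k$ and $i_{k+1}$ lie in two \emph{different} $\sigma$-blocks separated by a $\sigma'$-run that, when contracted, glues equal primes; the number of gluings needed is $\beta_\sigma(\mathfrak T)-1$. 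More carefully, I would argue that after the deletions, merging the equal-prime adjacent terms $P_{i_k}$, $P_{i_{k+1}}$ into their product (still a $p$-subgroup, still satisfying (1)–(3) by the $T_j$-normality observation and Lemma~\ref{elementary}) yields a genuine tower whose length is $\nu$ minus the number of merges. Since $\mathfrak T$ restricted to $\sigma$-levels consists of subgroups of the Hall $\sigma$-subgroup (conjugate into $A$), this produces a tower of $A$ of length $\geq \nu_\sigma(\mathfrak T)-(\beta_\sigma(\mathfrak T)-1)$, and Lemma~\ref{lemma31} gives $h(A)\geq \nu_\sigma(\mathfrak T)-\beta_\sigma(\mathfrak T)+1$.

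For part (2): I would exhibit the $\sigma$-blocks of $\mathfrak T$ as successive $\O\sigma{\,\cdot\,}$-layers. Working top-down in $T_1=P_h\cdots P_1$, the bottom block of $\mathfrak T$ (the one containing $P_h$, if $\pi^*(P_h)\in\sigma$; otherwise one descends past the $\sigma'$-run) generates a nontrivial $\sigma$-subgroup that survives modulo $\O{\sigma'}{T_1}$ — here the key point is that the ``bar'' construction guarantees each $\overline{P_i}\neq1$, so the $\sigma$-levels in a block cannot be absorbed into the $\sigma'$-part. Peeling off one $\sigma'\sigma$-pair of the upper $\sigma'\sigma$-series of $T_1$ kills exactly one $\sigma$-block (and the adjacent $\sigma'$-run below it), by an induction on the tower length that uses Lemma~\ref{elementaryy} to pass to the quotient with a tower of length one less. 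Since $\ell_\sigma(T_1)\leq\ell_\sigma(G)$ (Hall subgroups and the $\sigma$-length behave monotonically under passing to subgroups containing a Hall $\sigma$-subgroup — or more safely, one notes $T_1$ is subnormal, or simply reproves the needed monotonicity), we conclude $\ell_\sigma(G)\geq\beta_\sigma(\mathfrak T)$.

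**Main obstacle.** The delicate step is the bookkeeping in part (1): showing that contracting a $\sigma'$-run in Lemma~\ref{lemma33} costs \emph{at most one} unit of tower length, and that the equal-prime collisions it may create can be repaired by merging adjacent same-prime levels into a single $p$-subgroup \emph{without} destroying conditions (2) and (3) — in particular that the merged group still has nontrivial image $\overline{(\,\cdot\,)}$. This is where Lemma~\ref{elementary} and the normality of the $T_j$ do the real work, and one must be careful that merges at different places do not interact. The corresponding induction in part (2) is more routine given Lemma~\ref{elementaryy}, but it still requires choosing the normal subgroup $N$ (a suitable piece of $\O{\sigma'}{T_1}$ or $\O\sigma{T_1}$) so that the hypothesis \eqref{eq1} of Lemma~\ref{elementaryy} holds; verifying that is the second most technical point.
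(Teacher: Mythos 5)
Your part~(2) is in outline the paper's own argument: induct, and use Lemma~\ref{elementaryy} with $N=\O{\sigma'}{G}$ when that is nontrivial, and otherwise with $M=\O{\sigma}{G}$, noting $\ell_\sigma(G)=\ell_\sigma(G/M)+1$. The verification you flag as ``the second most technical point'' is actually immediate: for a normal $\sigma'$-group $N$ one has $[P_j\cap N,P_h]\le N\cap P_h=1$ when $\pi^*(P_h)\in\sigma$, so hypothesis~\eqref{eq1} holds, and symmetrically for $M$ against the $\sigma'$-term $P_{h-1}$; what you do still owe is the bookkeeping of how $\beta_\sigma$ changes when $P_h$ is dropped (the case distinction according to whether $P_h$ is itself a $\sigma$-block). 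Also, $\ell_\sigma(T_1)\le\ell_\sigma(G)$ needs no subnormality: intersecting the $\sigma'\sigma$-series of $G$ with any subgroup does the job.

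The genuine gap is in part~(1), at exactly the point you call the main obstacle: the claim that merging two adjacent same-prime terms $P_{i_k},P_{i_{k+1}}$ into their product preserves condition~(3) of Definition~\ref{Ttower} is asserted ``by the $T_j$-normality observation and Lemma~\ref{elementary}'', but Lemma~\ref{elementary} only says $\cent{P_j}{P_h}\le\cent{P_j}{\overline{P_{j+1}}}$, i.e.\ it controls centralisers of the top group $P_h$ itself. What the merge actually requires is a statement about \emph{bars of the pruned family}: if an element of the term below the merge centralises the new bar of $P_{i_k}P_{i_{k+1}}$ (a quotient by $\cent{P_{i_k}P_{i_{k+1}}}{\overline{P_{i_{k+2}}}}$), then it already centralises the old bar of $P_{i_k}$, so that all lower bars stay nontrivial. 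This ``centralising a higher bar forces centralising the intermediate bar'' needs a separate three-subgroups argument, run inside the pruned family (whose bars are not those of $\mathfrak T$), together with the fact that earlier terms normalise the bar kernels of later ones; none of this is supplied, and it is the entire content of part~(1), since once granted, Lemma~\ref{lemma31} finishes as you say. The argument can be completed along these lines, but note that the paper sidesteps the merge entirely: it inducts on $h+|G|$ after reducing to $G=T_1$, and when contracting a $\sigma'$-run would create an equal-prime collision it deletes the lower colliding $\sigma$-term as well (Lemma~\ref{lemma33}), which costs one unit of $\nu_\sigma$ but is exactly repaid because $\beta_\sigma$ drops by one too; the one-block base case uses that the top block is normal in $T_1$, hence \emph{is} the Hall $\sigma$-subgroup. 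That trick removes precisely the delicate verification your sketch leaves open.
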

\begin{proof}Observe that $h(A),\ell_\sigma(G)\geq 1$ because  $\emptyset\neq\sigma\subseteq \pi(G)$. In particular, we may assume that $\nu_\sigma(\mathfrak{T}),\beta_{\sigma}(\mathfrak{T})\neq 0$ and hence $\sigma_0:=\sigma\cap \{\pi^*(P_i)\mid 1\leq i\leq h\}\neq\emptyset$. Let $A_0$ be a Hall $\sigma_0$-subgroup of $T_1$. Observe that $\mathfrak{T}$ is a tower of $T_1$ and that the hypothesis of this lemma are satisfied with $(G,\sigma,A)$ replaced by $(T_1,\sigma_0,A_0)$. As $h(A_0)\leq h(A)$ and $\ell_\sigma(T_1)\leq \ell_\sigma(G)$, for proving parts~$(1)$ and~$(2)$ we may assume that $G=T_1$, $\sigma=\sigma_0$ and $A=A_0$.

Part~(1): We argue by induction on $h+|G|$. If $h=1$, then $\nu_\sigma(\mathfrak{T})=\beta_\sigma(\mathfrak{T})=1$ and the proof follows.

Assume that $\pi^*(P_h)\notin \sigma$. Write $\mathfrak{T}':=(P_i\mid i\in\{1,\ldots,h-1\})$. From Lemma~\ref{lemma33},  the family $\mathfrak{T}'$ is a tower of $G$.  As $\nu_{\sigma}(\mathfrak{T}')=\nu_{\sigma}(\mathfrak{T})$, $\beta_\sigma(\mathfrak{T}')=\beta_\sigma(\mathfrak{T})$ and $\mathfrak{T}'$ has length $h-1$, the proof follows by induction.

Assume that $\pi^*(P_h)\in \sigma$. Let $t\in\{1,\ldots, h\}$ with $T_t=P_hP_{h-1}\cdots P_t$ a $\sigma$-block of $\mathfrak{T}$. Suppose that $T_t$ is the only $\sigma$-block of $\mathfrak{T}$.  Thus
$\nu_\sigma(\mathfrak{T})=h-t+1$, $\beta_\sigma(\mathfrak{T})=1$ and $T_t$ is a Hall $\sigma$-subgroup of $G$. Moreover, since $T_t\unlhd T_1=G$, we have $A=T_t$.  Write $\mathfrak{T}':=(P_i\mid i\in \{t,\ldots,h\})$. From Lemma~\ref{lemma33}, the family $\mathfrak{T}'$ is a tower of $G$ and hence a tower of $A$. As $\mathfrak{T}'$ has length $h-t+1$, from Lemma~\ref{lemma31}, we get $h(A)\geq h-t+1$ and the proof follows.

Suppose that $T_t$ is not the only $\sigma$-block of $G$, and let $j\in \{1,\ldots,t-1\}$ be maximal with  $\pi^*(P_j)\in\sigma$. Suppose $\pi^*(P_j)\neq \pi^*(P_t)$. Then Lemma~\ref{lemma33} yields that $\mathfrak{T}':=(P_i\mid i\in \{1,\ldots,h\}\setminus\{j+1,\ldots,t-1\})$ is a tower of $G$. Since $\mathfrak{T}'$ has length $h-(t-j-1)<h$, from our induction we deduce
$$h(A)\geq \nu_\sigma(\mathfrak{T}')-\beta_\sigma(\mathfrak{T}')+1=\nu_\sigma(\mathfrak{T})-
(\beta_\sigma(\mathfrak{T})-1)+1=\nu_\sigma(\mathfrak{T})-\beta_\sigma(\mathfrak{T})+2.$$

Finally, suppose that $\pi^*(P_j)=\pi^*(P_t)$. In particular, either $\pi^*(P_{j-1})\neq 
\pi^*(P_t)$ or $j=1$. Now, Lemma~\ref{lemma33} gives that $\mathfrak{T}':=(P_i\mid i\in \{1,\ldots,h\}\setminus\{j,\ldots,t-1\})$ is a tower of $G$. As $\mathfrak{T}'$ has length $h-(t-j)<h$, the inductive hypothesis yields
$$h(A)\geq \nu_\sigma(\mathfrak{T}')-\beta_\sigma(\mathfrak{T}')+1=(\nu_\sigma(\mathfrak{T})-1)-
(\beta_\sigma(\mathfrak{T})-1)+1=\nu_\sigma(\mathfrak{T})-\beta_\sigma(\mathfrak{T})+1.$$

Part~(2): As in Part~$(1)$, we proceed by induction on $h+|G|$. Assume $\pi^*(P_h)\notin\sigma$. Then $\mathfrak{T}':=(P_i\mid i\in \{1,\ldots,h-1\})$ is a tower of $G$ of length $h-1$ with $\beta_\sigma(\mathfrak{T}')=\beta_\sigma(\mathfrak{T})$. Thus the proof follows  by induction.

Assume that $\pi^*(P_{h})\in \sigma$. Write $N:=\O {\sigma'} G$ and assume first that $N\neq 1$. For $j\in \{1,\ldots,h-1\}$, we have $[P_j\cap N,P_h]\leq N\cap P_h=1$ and hence $P_j\cap N\leq \cent {P_j}{P_h}$. In particular, by Lemma~\ref{elementaryy}, $\mathfrak{T}':=(P_iN/N\mid i\in \{1,\ldots,h-1\})$ is a tower of $G/N$ and, by induction, $\beta_\sigma(\mathfrak{T}')\leq \ell_{\sigma}(G/N)$. Since $\beta_\sigma(\mathfrak{T})=\beta_\sigma(\mathfrak{T}')$ and $\ell_\sigma(G)\geq \ell_{\sigma}(G/N)$, we get $\beta_\sigma(\mathfrak{T})\leq \ell_{\sigma}(G)$. Assume then that $N=1$. 

Write $\mathfrak{T}':=(P_i\mid i\in \{1,\ldots,h-1\})$. By Lemma~\ref{lemma33}, $\mathfrak{T}'$ is a tower of $G$ of length $h-1$. If $P_h$ is a not $\sigma$-block, then $\beta_\sigma(\mathfrak{T}')=\beta_\sigma(\mathfrak{T})$ and, by induction, $\beta_\sigma(\mathfrak{T})\leq \ell_\sigma(G)$.  
Suppose that $P_h$ is a $\sigma$-block, that is, $\pi^*(P_{h-1})\notin\sigma$. Clearly, $
\beta_\sigma(\mathfrak{T})=\beta_{\sigma}(\mathfrak{T}')+1$.

Write $M:=\O {\sigma}G$ and observe that $M\neq 1$ and 
$\ell_\sigma(G)=\ell_{\sigma}(G/M)+1$
because $\O {\sigma'} G=N=1$. For $j\in \{1,\ldots,h-2\}$, we have $[ P_j\cap M,P_{h-1}]\leq M\cap P_{h-1}=1$ and hence $P_j\cap M\leq \cent {P_j}{P_{h-1}}$.  In particular, by Lemma~\ref{elementaryy} (applied to $\mathfrak{T}'$), $\mathfrak{T}{''}:=(P_jM/M\mid j\in \{1,\ldots,h-2\})$ is a tower of $G/M$.  Now, by induction, $\ell_\sigma(G/M)\geq \beta_\sigma(\mathfrak{T}^{''})=\beta_\sigma(\mathfrak{T}')$ from which it follows that
$\ell_\sigma(G)\geq\beta_\sigma(\mathfrak{T})$.
\end{proof}

\section{Factorisations: Proofs of Theorems~\ref{thrmA} and~\ref{thrmB} and Corollary~\ref{corcor}}\label{section4}
We start by proving the following.


\begin{lemma}\label{lemma41}
Let $G$ be a group, let $\sigma$ be a non-empty proper subset of $\pi(G)$ and let $G=AB$ be a factorisation, with $A$ a $\sigma$-subgroup of $G$ and $B$ a $\sigma'$-subgroup of $G$. Then
\[
h(G)\leq h(A)+h(B)+\ell_\sigma(G)+\ell_{\sigma'}(G)-2
\]
and
\[
h(G)\leq h(A)+h(B)+2\min\{\ell_\sigma(G),\ell_{\sigma'}(G)\}-1.
\]
\end{lemma}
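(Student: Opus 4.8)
The plan is to run both inequalities off a single Fitting tower of $G$, using Lemma~\ref{lemma34} as a black box. First note that, since $A$ is a $\sigma$-group, $B$ is a $\sigma'$-group and $G=AB$, we have $\gcd(|A|,|B|)=1$ and $|G|=|A|\,|B|$; hence $A$ is a Hall $\sigma$-subgroup of $G$ and $B$ is a Hall $\sigma'$-subgroup of $G$. Moreover both $\sigma\cap\pi(G)$ and $\pi(G)\setminus\sigma$ are non-empty, because $\sigma$ is a non-empty proper subset of $\pi(G)$; so Lemma~\ref{lemma34} is applicable both to the pair $(\sigma,A)$ and to the pair $(\sigma',B)$, in the latter case reading $\sigma'$ as $\pi(G)\setminus\sigma$ (this changes none of $\ell_{\sigma'}(G)$, $\nu_{\sigma'}(\mathfrak{T})$, $\beta_{\sigma'}(\mathfrak{T})$).

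Next I would pick, via Lemma~\ref{lemma31}, a tower $\mathfrak{T}=(P_i\mid i\in\{1,\ldots,h\})$ of $G$ with $h=h(G)$. Every $\pi^*(P_i)$ is a prime of $\pi(G)$, hence lies in exactly one of $\sigma$, $\sigma'$; so $\nu_\sigma(\mathfrak{T})+\nu_{\sigma'}(\mathfrak{T})=h$. Applying Lemma~\ref{lemma34}(1) to $(\sigma,A)$ and to $(\sigma',B)$ and adding the two inequalities yields
\[
h(A)+h(B)\ \ge\ \nu_\sigma(\mathfrak{T})+\nu_{\sigma'}(\mathfrak{T})-\beta_\sigma(\mathfrak{T})-\beta_{\sigma'}(\mathfrak{T})+2\ =\ h(G)-\beta_\sigma(\mathfrak{T})-\beta_{\sigma'}(\mathfrak{T})+2,
\]
that is, $h(G)\le h(A)+h(B)+\beta_\sigma(\mathfrak{T})+\beta_{\sigma'}(\mathfrak{T})-2$. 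Since Lemma~\ref{lemma34}(2) gives $\beta_\sigma(\mathfrak{T})\le\ell_\sigma(G)$ and $\beta_{\sigma'}(\mathfrak{T})\le\ell_{\sigma'}(G)$, the first inequality of the statement follows at once.

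For the second inequality the extra ingredient is a counting observation: because every term $P_i$ of $\mathfrak{T}$ is ``coloured'' (its prime lies in $\sigma$ or in $\sigma'$), the $\sigma$-blocks and the $\sigma'$-blocks of $\mathfrak{T}$ are precisely the maximal monochromatic runs of $\{1,\ldots,h\}$, and consecutive such runs have opposite colour; hence $|\beta_\sigma(\mathfrak{T})-\beta_{\sigma'}(\mathfrak{T})|\le 1$, so $\beta_\sigma(\mathfrak{T})+\beta_{\sigma'}(\mathfrak{T})\le 2\min\{\beta_\sigma(\mathfrak{T}),\beta_{\sigma'}(\mathfrak{T})\}+1$. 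Substituting this into $h(G)\le h(A)+h(B)+\beta_\sigma(\mathfrak{T})+\beta_{\sigma'}(\mathfrak{T})-2$ and then using $\min\{\beta_\sigma(\mathfrak{T}),\beta_{\sigma'}(\mathfrak{T})\}\le\min\{\ell_\sigma(G),\ell_{\sigma'}(G)\}$ (again from Lemma~\ref{lemma34}(2)) gives $h(G)\le h(A)+h(B)+2\min\{\ell_\sigma(G),\ell_{\sigma'}(G)\}-1$. I do not anticipate a genuine obstacle here: all the substance is already packaged in Lemma~\ref{lemma34}, and the only point deserving a line of justification is the ``differ by at most one'' claim, which is immediate once one observes that the two families of blocks exhaust and interlace the index set $\{1,\ldots,h\}$.
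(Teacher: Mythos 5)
Your proof is correct. For the first inequality you follow exactly the paper's route: take a Fitting tower $\mathfrak{T}$ of $G$, note $h(G)=\nu_\sigma(\mathfrak{T})+\nu_{\sigma'}(\mathfrak{T})$, apply Lemma~\ref{lemma34}(1) to $(\sigma,A)$ and $(\sigma',B)$, and then bound $\beta_\sigma(\mathfrak{T})$, $\beta_{\sigma'}(\mathfrak{T})$ by $\ell_\sigma(G)$, $\ell_{\sigma'}(G)$ via Lemma~\ref{lemma34}(2). Where you diverge is the second inequality: the paper deduces it from the first by the general fact about the $\pi'\pi$-series that $\ell_{\pi'}(G)\leq\ell_\pi(G)+1$, applied with $\pi=\sigma$ and $\pi=\sigma'$, whereas you stay at the level of the tower and use the combinatorial observation that the $\sigma$-blocks and $\sigma'$-blocks are the maximal monochromatic runs of $\{1,\ldots,h\}$ and hence interlace, so $|\beta_\sigma(\mathfrak{T})-\beta_{\sigma'}(\mathfrak{T})|\leq 1$ and $\beta_\sigma(\mathfrak{T})+\beta_{\sigma'}(\mathfrak{T})\leq 2\min\{\beta_\sigma(\mathfrak{T}),\beta_{\sigma'}(\mathfrak{T})\}+1\leq 2\min\{\ell_\sigma(G),\ell_{\sigma'}(G)\}+1$. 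Both derivations are valid; yours is purely combinatorial and needs no extra input about $\pi$-length series (and in intermediate form gives the slightly sharper bound $h(G)\leq h(A)+h(B)+2\min\{\beta_\sigma(\mathfrak{T}),\beta_{\sigma'}(\mathfrak{T})\}-1$), while the paper's observation $\ell_{\pi'}(G)\leq\ell_\pi(G)+1$ is a standard fact about the upper $\pi'\pi$-series that applies without reference to any particular tower. Your preliminary remarks (that $A$ and $B$ are automatically Hall $\sigma$- and $\sigma'$-subgroups, and that $\sigma$ and $\pi(G)\setminus\sigma$ are both non-empty so Lemma~\ref{lemma34} applies to both pairs) are correct and make explicit what the paper leaves implicit.
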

\begin{proof}
Let $\mathfrak{T}$ be a Fitting tower of $G$ (see Definition~\ref{Fittingtower}). Using first Lemma~\ref{lemma34} part~(1) and then part~(2), we have
\begin{eqnarray*}
(\dag)\qquad h(G)&=&\nu_\sigma(\mathfrak{T})+\nu_{\sigma'}(\mathfrak{T})\leq (h(A)+\beta_\sigma(\mathfrak{T})-1)+(h(B)+\beta_{\sigma'}(\mathfrak{T})-1)\\
&=&h(A)+h(B)+\beta_{\sigma}(\mathfrak{T})+\beta_{\sigma'}(\mathfrak{T})-2\\
&\leq &h(A)+h(B)+\ell_{\sigma}(G)+\ell_{\sigma'}(G)-2.
\end{eqnarray*}

Observe that, for each set of prime numbers $\pi$, from the definition of $\pi'\pi$-series we have $\ell_{\pi'}(G)\leq \ell_\pi(G)+1$. Applying this remark with $\pi=\sigma$ and  with $\pi=\sigma'$, from $(\dag)$ we get 
$h(G)\leq h(A)+h(B)+2\min\{\ell_\sigma(G),\ell_{\sigma'}(G)\}-1.$
\end{proof}

\begin{proof}[Proof of Theorem~$\ref{thrmA}$]
Write $\sigma:=\pi(A)$ and $\sigma':=\pi(B)$. If $|B|$ is odd or if $B$ is nilpotent, then Theorem~\ref{thrm2.3} yields $\ell_{\sigma'}(G)\leq d(B)$. In the first case, Eq.~\eqref{eq:1} follows directly from Lemma~\ref{lemma41}. In the second case, $h(B)=1$ and now Eq.~\eqref{eq:2} follows again from Lemma~\ref{lemma41}.  
\end{proof}

We now show that the bounds in Theorem~\ref{thrmA} are (in some cases) best possible. (We denote by $C_n$ a cyclic group of order $n$.)
\begin{example}\label{ex1}{\rm
Let $p,q,r$ and $t$ be distinct primes and let $n\geq 1$. Define $H_0:=C_p\wr C_q$ and $H_1:=(H_0\wr C_r)\wr (C_q\wr C_p)$. Now, for each $i\geq 1$, define inductively $H_{2i}:=(H_{2i-1}\wr C_r)\wr (C_p\wr C_q)$ and $H_{2i+1}:=(H_{2i}\wr C_r)\wr (C_q\wr C_p)$. 

We let $H:=H_{n}$ and $G:=C_t\wr H$. Let $A$ be a Hall $\{p,q\}$-subgroup of $G$ and let $B$ be a Hall $\{r,t\}$-subgroup of $G$.  A computation shows that $h(A)=n+2$, $h(B)=2$, $h(G)=3n+3$ and $d(B)=n+1$. Theorem~\ref{thrmA}~\eqref{eq:1} predicts $h(G)\leq h(A)+h(B)+2d(B)-1$, and in fact in this example the equality is met.
}
\end{example}

\begin{example}\label{ex2}{\rm 
Let $p$ and $q$ be distinct primes and let $n\geq 0$. Define $G_0:=C_p$ and $G_1:=G_0\wr C_q$. Now, for each $i\geq 1$, define inductively $G_{2i}:=G_{2i-1}\wr C_p$ and $G_{2i+1}:=G_{2i}\wr C_q$. 

Let $G:=G_{2n}$, let $A$ be a Sylow $p$-subgroup of $G$ and let $B$ be a Sylow $q$-subgroup of $G$. A computation shows that $h(A)=1$, $d(B)=n$ and $h(G)=2n+1$. Theorem~\ref{thrmA}~\eqref{eq:2} predicts $h(G)\leq h(A)+2d(B)$, and in fact in this example the equality is met.}
\end{example}

\begin{proof}[Proof of Corollary~$\ref{corcor}$] From Lemma~\ref{lemma:2.1}, there exists a Sylow basis $\{G_p\}_{p\in\pi(G)}$ of $G$ with $A=\prod_{p\in\pi(A)}G_p$ and $B=\prod_{p\in \pi(B)}G_p$. 

Now, we argue by induction on $|\pi(B)|$. If $|\pi(B)|=1$, then $B$ is nilpotent and hence the proof follows from Theorem~\ref{thrmA}~\eqref{eq:2}. Suppose that $|\pi(B)|>1$. Fix $q\in \pi(B)$ and write $B_{q'}:=\prod_{p\in \pi(B)\setminus\{q\}}G_p$. Clearly, $G=AB=(AG_q)B_{q'}$ and hence, by induction, 
\begin{eqnarray*}
h(G)&\leq& h(AG_q)+2\sum_{p\in \pi(B_{q'})}d(G_p)\leq (h(A)+2d(G_q))+2\sum_{p\in\pi(B_{q'})}d(G_p)\\
&=&h(A)+2\sum_{p\in \pi(B)}d(G_p).
\end{eqnarray*}
\end{proof}

The proof of Theorem~\ref{thrmB} will follow at once from the following lemma, which (in our opinion) is of independent interest.

\begin{lemma}\label{lemma4.4}
Let $G$ be a group, let $\sigma$ be a non-empty subset of $\pi(G)$ and let $H$ be a Hall $\sigma$-subgroup of $G$. Then $\ell_\sigma(G)\leq \delta(H)h(H)$.
\end{lemma}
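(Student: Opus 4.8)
The plan is to bound $\ell_\sigma(G)$ by repeatedly peeling off $\sigma'$-$\sigma$-layers and tracking how each layer forces the derived length of the Hall $\sigma$-subgroup $H$ to drop by a controlled amount. First I would set up an induction on $|G|$, and let $1=P_0\le N_0\le P_1\le N_1\le\cdots$ be the upper $\sigma'\sigma$-series of $G$, so that $\ell_\sigma(G)$ is the number of nontrivial $\sigma$-layers $P_{i+1}/N_i$. The key reduction is: if $\O{\sigma'}G\ne 1$, call it $N$; then $HN/N$ is a Hall $\sigma$-subgroup of $G/N$ with $HN/N\cong H$ (by the coprimality $|N|$ is a $\sigma'$-number), so $\delta(HN/N)=\delta(H)$ and $h(HN/N)=h(H)$, while $\ell_\sigma(G/N)=\ell_\sigma(G)$ (factoring out a $\sigma'$-group does not change the $\sigma$-length). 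Thus by induction we are done unless $\O{\sigma'}G=1$, i.e.\ $M:=\O\sigma G\ne 1$ and $M\le H$, with $\ell_\sigma(G)=\ell_\sigma(G/M)+1$.

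So it suffices to show that passing from $H$ to a Hall $\sigma$-subgroup of $G/M$ drops the quantity $\delta(\cdot)h(\cdot)$ by at least $1$. The natural choice is $\bar H:=HM/M\cong H/M$, a Hall $\sigma$-subgroup of $G/M$. Now $M$ is a nontrivial normal $\sigma$-subgroup of $H$ contained in $\F H$. The crucial point is that $M$ is \emph{not} merely a central chunk: because $\O{\sigma'}G=1$ and $M=\O\sigma G$, the group $M$ contains its own centraliser in $G$ in the appropriate relative sense (Hall--Higman type argument applied within the $\sigma$-structure), so $M$ is ``big'' inside $H$ along at least one Sylow prime $p\in\sigma$. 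I would argue that either $h(H/M)\le h(H)-1$, or $h(H/M)=h(H)$ but then $\F{H/M}$ picks up contributions that were previously inside $M$, forcing $\delta(H/M)\le\delta(H)-1$; in either case $\delta(H/M)h(H/M)\le \delta(H)h(H)-1$, and combined with $\ell_\sigma(G)=\ell_\sigma(G/M)+1\le \delta(H/M)h(H/M)+1\le\delta(H)h(H)$ we finish.

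To make the dichotomy precise I would use towers: a Fitting tower of $H$ of length $h(H)$, restricted/projected modulo $M$ via Lemma~\ref{lemma33}, loses at most one step, and it loses exactly one step only when the bottom of the tower sits inside $M$ and shares a prime with what lies just above $M/$its complement — precisely the situation that, by Theorem~\ref{thrm2.2}, bounds a $p$-length (hence a chunk of the tower) by $d(H_p)\le\delta(H)$. The book-keeping is: the number of tower-steps with prime in $\sigma$ equals (by the fact that $H$ is a $\sigma$-group) the whole length $h(H)$, and $\ell_\sigma(G)$ counts $\sigma$-blocks in a Fitting tower of $G$ by Lemma~\ref{lemma34}(2); an analogous $\sigma$-block/step analysis for the Hall $\sigma$-subgroup, using Theorem~\ref{thrm2.2} to bound the length of each maximal single-prime run, yields $\ell_\sigma(G)\le \sum_{\text{blocks}} (\text{steps in block})\le h(H)\cdot\max_p d(H_p)=h(H)\delta(H)$.

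The main obstacle I expect is the second step: cleanly proving that removing $M=\O\sigma G$ (when $\O{\sigma'}G=1$) strictly decreases $\delta(H)h(H)$, rather than just being $\le$. The danger is that $M$ could be a small central $p$-subgroup of $H$ that affects neither $\delta$ nor $h$ of $H$; ruling this out requires exploiting that $M$ is \emph{self-centralising} in $G$ modulo the trivial $\O{\sigma'}G$, i.e.\ $\cent G M\le M$, so that the Sylow $p$-subgroup of $M$ genuinely reaches to the ``top'' of the relevant part of $H$. Getting the tower surgery to interact correctly with this self-centralising property — ensuring the dropped step is accounted for by a $d(H_p)$ term and not double-counted — is where the real care lies; everything else is the standard $\O{\sigma'}$/$\O\sigma$ induction plus the tower lemmas already established.
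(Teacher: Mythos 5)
Your overall frame (induct on $|G|$, factor out $\O{\sigma'}{G}$, then peel off $M:=\O{\sigma}{G}$ using $\ell_\sigma(G)=\ell_\sigma(G/M)+1$) is fine up to the step you yourself flag as the ``main obstacle'', but that step is a genuine gap: the inequality you need, $\delta(H/M)h(H/M)\le \delta(H)h(H)-1$, i.e.\ the dichotomy ``either $h$ drops or $\delta$ drops when $M$ is factored out'', is in fact \emph{false} in general, even with $\cent{G}{M}\le M$. Concretely, take $\sigma=\{2,3\}$: let $S$ be a $3$-group with $d(S)=3$, let $E$ be the regular $\mathbb{F}_2S$-module and $K:=E\rtimes S$; let $W$ be the regular $\mathbb{F}_7K$-module and $L:=W\rtimes K$; let $M$ be the regular $\mathbb{F}_2L$-module and $G:=M\rtimes L$. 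Then $\cent{G}{M}=M$, so $\O{\sigma'}{G}=1$ and $\O{\sigma}{G}=M$, and $H:=MK$ is a Hall $\sigma$-subgroup of $G$. Here $\F{H}=ME$, so $h(H)=h(S)+1=2=h(K)=h(H/M)$; the Sylow $2$-subgroup $ME$ of $H$ is metabelian while $d(S)=3$, so $\delta(H)=3=\delta(K)=\delta(H/M)$. Thus $\delta(H/M)h(H/M)=\delta(H)h(H)$, and your induction step only yields $\ell_\sigma(G)\le\delta(H)h(H)+1$, which does not close. (The lemma itself still holds there -- $\ell_\sigma(G)=2$ -- but your argument cannot certify it.) Your fallback sketch via towers does not repair this: Lemma~\ref{lemma34}(2) bounds $\ell_\sigma(G)$ from \emph{below} by $\beta_\sigma(\mathfrak{T})$, no tower-theoretic \emph{upper} bound for $\ell_\sigma(G)$ is available in the paper, and inside a $\sigma$-block consecutive primes are distinct by Definition~\ref{Ttower}(4), so ``maximal single-prime runs'' have length one and Theorem~\ref{thrm2.2} does not control the length of a block.

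The paper's proof makes the ``loss of one'' happen elsewhere, and you are missing its key reduction. After arranging $\O{\sigma'}{G}=1$ it first reduces (by the standard two-minimal-normal-subgroups argument, also handling $|\sigma|=1$ separately via Theorem~\ref{thrm2.2}) to the case of a unique minimal normal subgroup, which forces $\F{G}=\O{p}{G}$ for a \emph{single} prime $p\in\sigma$ and hence $\F{H}=\O{p}{H}$. It then does not quotient by $\O{\sigma}{G}$ at all: it splits the prime set, $\tau:=\sigma\setminus\{p\}$, uses $\ell_\sigma(G)\le\ell_p(G)+\ell_\tau(G)$, bounds $\ell_p(G)\le d(G_p)\le\delta(H)$ by Theorem~\ref{thrm2.2}, and applies induction to $\tau$, the decisive point being that a Hall $\tau$-subgroup $G_\tau$ embeds in $H/\F{H}$, so $h(G_\tau)\le h(H)-1$. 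There the decrement is certified by the Fitting height of the Hall $\tau$-subgroup, which is exactly what your $(\delta,h)$-bookkeeping for $H/\O{\sigma}{G}$ cannot certify; to salvage your scheme you would need some such prime-splitting or a finer invariant that provably decreases when $\O{\sigma}{G}$ is factored out.
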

\begin{proof}
When $|\sigma|=1$, the proof follows immediately from Theorem~\ref{thrm2.2}. In particular, we may assume that $|\sigma|>1$. Now we proceed by induction on $|G|+|\sigma|$.

Clearly, $\ell_\sigma(G)=\ell_\sigma(G/\O{\sigma'}G)$ and $H\O{\sigma'} G/\O{\sigma'}G\cong H$ is a Hall $\sigma$-subgroup of $G/\O{\sigma'}G$. When $\O{\sigma'}G\neq 1$, the proof follows by induction, and hence we may assume that $\O{\sigma'}G=1$.

Suppose that $G$ contains two distinct minimal normal subgroups $N$ and $M$. Clearly, $\pi(N),\pi(M)\subseteq \sigma$. As $\O{\sigma'}G=1$, we deduce that $\ell_\sigma(G/N)=\ell_\sigma(G)=\ell_\sigma(G/M)$. Moreover, by induction, $\ell_\sigma(G/N)\leq \delta(H/N)h(H/N)\leq \delta(H)h(H)$. This gives $\ell_\sigma(G)\leq\delta(H)h(H)$, and hence we may assume that $G$ contains a unique minimal normal subgroup. This yields $\F G=\O p G$, for some $p\in \sigma$. As $\cent G{\O p G}\leq \O p G$ and $\O p G\leq H$, we have $\F H=\O  p H$.

Write $\tau:=\sigma\setminus\{p\}$. Observe that $\ell_\sigma(G)\leq \ell_p(G)+\ell_\tau(G)$. As $G_\tau$ is isomorphic to a subgroup of $H/\F H$, we get $h(G_\tau)\leq h(H/\F H)=h(H)-1$. Now, from the inductive hypothesis, we obtain
\begin{eqnarray*}
\ell_\sigma(G)&\leq& \ell_p(G)+\ell_\tau(G)\leq \delta(G_p)h(G_p)+\delta(G_\tau)h(G_\tau)\\
&\leq& \delta(H)+\delta(H)(h(H)-1)\leq \delta(H)h(H).
\end{eqnarray*}
\end{proof}

\begin{proof}[Proof of Theorem~$\ref{thrmB}$]
Write $\sigma:=\pi(A)$ and $\sigma':=\pi(B)$. From Lemma~\ref{lemma4.4}, we get $\ell_\sigma(G)\leq \delta(A)h(A)$ and $\ell_{\sigma'}(G)\leq \delta(B)h(B)$. Now the proof follows from the second inequality in Lemma~\ref{lemma41}.
\end{proof}

\section{Factorisations: Proof of Theorem~\ref{thrmC}}\label{section5}
Before proceeding with the proof of Theorem~\ref{thrmC} we need to introduce some auxiliary notation.

Given a group $G$, we denote with $\R G $ the \textit{nilpotent residual} of $G$, that is, the smallest (with respect to inclusion) normal subgroup $N$ of $G$ with $G/N$ nilpotent. Then, we define inductively the descending normal series $\{\RR i G\}_i$ by $\RR 0 G:=G$ and $\RR {i+1} G:=\R{\RR i G}$, for every $i\geq 0$. Observe that if $h=h(G)$, then for every $i\in \{0,\ldots,h\}$ we have  $\RR {h-i}G\leq \FF i G$.

Now, let $A$ be a Hall subgroup of $G$  and, for $i\in \{1,\ldots,h\}$, define
\[
\ell^i(G,A):=\max\{\ell_p(G)\mid p\in \pi(\RR{i-1} A /\RR i A)\}\quad\textrm{and}\quad \Lambda_G(A):=\sum_{i=1}^{h(A)}\ell^i(G,A).
\]
It is clear  that, for every normal subgroup $N$ of $G$, $\Lambda_{G/N}(AN/N)\leq \Lambda_G(A)$.

\begin{lemma}\label{thrm48}Let $G=AB$ be a finite soluble group factorised by its proper subgroups $A$ and $B$ with $\gcd(|A|,|B|)=1$. Then $h(G)\leq \Lambda_G(A)+\Lambda_G(B)$.
\end{lemma}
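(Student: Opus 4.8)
The plan is to use a Fitting tower $\mathfrak{T}=(P_i\mid i\in\{1,\dots,h\})$ of $G$ of length $h=h(G)$ (guaranteed by Lemma~\ref{lemma31}) and to estimate $h$ by splitting the indices $\{1,\dots,h\}$ according to whether $\pi^*(P_i)\in\pi(A)$ or $\pi^*(P_i)\in\pi(B)$: since $\gcd(|A|,|B|)=1$ and $\pi(G)=\pi(A)\cup\pi(B)$ we get $h(G)=\nu_{\pi(A)}(\mathfrak{T})+\nu_{\pi(B)}(\mathfrak{T})$, exactly as in Lemma~\ref{lemma41}. So it suffices to show that $\nu_{\pi(A)}(\mathfrak{T})\le\Lambda_G(A)$ and, symmetrically, $\nu_{\pi(B)}(\mathfrak{T})\le\Lambda_G(B)$. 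Thus the whole content is the inequality $\nu_\sigma(\mathfrak{T})\le\Lambda_G(A)$ when $\sigma=\pi(A)$ and $A$ is a Hall $\sigma$-subgroup of $G$.

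To prove that, I would combine the two parts of Lemma~\ref{lemma34} more carefully than in Lemma~\ref{lemma41}. Part~(1) gives $\nu_\sigma(\mathfrak{T})-\beta_\sigma(\mathfrak{T})+1\le h(A)$, so $\nu_\sigma(\mathfrak{T})\le h(A)+\beta_\sigma(\mathfrak{T})-1$; writing $h(A)=\sum_{i=1}^{h(A)}1$ this is $\nu_\sigma(\mathfrak{T})\le\sum_{i=1}^{h(A)}1 +\beta_\sigma(\mathfrak{T})-1$, which already has the shape of $\Lambda_G(A)=\sum_{i=1}^{h(A)}\ell^i(G,A)$ provided we can distribute the ``excess'' $\beta_\sigma(\mathfrak{T})-1$ over the layers $\RR{i-1}A/\RR i A$ in such a way that the $i$-th layer absorbs at most $\ell^i(G,A)-1$ of it. The natural mechanism: each $\sigma$-block of $\mathfrak{T}$ is a tower all of whose primes lie in $\sigma=\pi(A)$, and when one restricts to a single prime $p$ the $p$-length controls how many consecutive $p$-sections of the Fitting series of $A$ a block can straddle. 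More precisely, I would run an induction (on $h+|G|$, mirroring the proof of Lemma~\ref{lemma34}) in which, when I peel off the top $\sigma$-block $T_t=P_h\cdots P_t$, I charge its contribution against the layers of $A$ that it meets; Lemma~\ref{lemma34}(2) applied to a single prime $p$ — i.e. $\ell_p(G)\ge$ (number of $p$-blocks in the relevant sub-tower) — is what bounds, for each $p\in\pi(\RR{i-1}A/\RR i A)$, the number of times a block can re-enter the $i$-th layer, giving the factor $\ell^i(G,A)$. Alternatively, and perhaps more cleanly, one can argue directly: fixing a single layer $\RR{i-1}A/\RR i A$, the sub-tower of $\mathfrak{T}$ consisting of the $P_j$ with $\pi^*(P_j)\in\pi(\RR{i-1}A/\RR i A)$ has all its primes in a set of $p$-length at most $\ell^i(G,A)$ for each such $p$, and a standard tower-surgery argument (Lemma~\ref{lemma33} plus Lemma~\ref{lemma34}) bounds its length by $\ell^i(G,A)$; summing over $i$ gives $\nu_\sigma(\mathfrak{T})\le\Lambda_G(A)$.

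The main obstacle is precisely the bookkeeping in the middle step: one must match $\beta_\sigma(\mathfrak{T})-1$, a crude count of blocks, to the finer data $\sum_i(\ell^i(G,A)-1)$, which requires knowing which layer of the Fitting (equivalently, $\R{}$-) series of $A$ each block's primes belong to and invoking $\ell_p$-bounds layer by layer. The facts that $\RR{h-i}A\le\FF i A$ and that $\Lambda$ does not increase under quotients (both noted in the excerpt) are exactly what make the induction go through when one factors out $\O{\sigma'}G$ or a minimal normal subgroup. Once $\nu_{\pi(A)}(\mathfrak{T})\le\Lambda_G(A)$ is established, the symmetric statement for $B$ is identical, and adding the two inequalities yields $h(G)=\nu_{\pi(A)}(\mathfrak{T})+\nu_{\pi(B)}(\mathfrak{T})\le\Lambda_G(A)+\Lambda_G(B)$, completing the proof.
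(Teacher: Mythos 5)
Your reduction of the lemma to the inequality $\nu_{\pi(A)}(\mathfrak{T})\le\Lambda_G(A)$ (plus its analogue for $B$) is sound, but that inequality is where all the content lies -- it is in fact \emph{stronger} than what the paper proves -- and neither of your two mechanisms establishes it. The first is only a plan: you never specify how a $\sigma$-block (which is a tower inside some conjugate of $A$) is matched to the layers $\RR{i-1}{A}/\RR{i}{A}$, nor prove that at most $\ell^i(G,A)$ blocks get ``charged'' to the $i$-th layer; you yourself flag this bookkeeping as the main obstacle, so it remains a gap. The second, ``cleaner'' mechanism is actually false: it asserts that the entries $P_j$ with $\pi^*(P_j)\in\pi_i:=\pi(\RR{i-1}{A}/\RR{i}{A})$ form a tower of length at most $\ell^i(G,A)=\max\{\ell_p(G)\mid p\in\pi_i\}$. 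Take $G=((C_p\wr C_r)\wr C_q)\wr C_r$ with $p,q,r$ distinct primes, $A$ a Hall $\{p,q\}$-subgroup and $B$ a Sylow $r$-subgroup. Then $h(G)=4$, $\ell_p(G)=\ell_q(G)=1$, and (using Lemma~\ref{lemma34} with $\sigma=\{p\}$ and $\sigma=\{q\}$) any Fitting tower has exactly one $p$-entry, one $q$-entry and two $r$-entries. Moreover $A\cong(C_p^{rq}\rtimes C_q)^r$, so $h(A)=2$, $\R{A}$ is a $p$-group and $\pi(A/\R{A})=\{p,q\}$; hence $\pi_1=\{p,q\}$ and $\ell^1(G,A)=1$, while the $p$-entry and the $q$-entry of the Fitting tower, after deleting the intervening $r$-entry via Lemma~\ref{lemma33}, form a tower of length $2>\ell^1(G,A)$. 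What Lemma~\ref{lemma34} really yields for a tower with primes in $\pi_i$ is length at most $h(G_{\pi_i})+\ell_{\pi_i}(G)-1$, which is too weak for your layerwise sum. (In this example $\nu_{\{p,q\}}(\mathfrak{T})=2=\Lambda_G(A)$, so your aggregate inequality may well be true; but it is unproved, and you cannot quote it.)

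The paper's own proof is tower-free and genuinely different: it inducts on $|G|$; with two distinct minimal normal subgroups one passes to a quotient using $\Lambda_{G/N}(AN/N)\le\Lambda_G(A)$; otherwise there is a unique minimal normal subgroup, so $\F{G}=\O{p}{G}\le A$ for some $p\in\pi(A)$, the top layer $\RR{h(A)-1}{A}\le\F{A}$ is a $p$-group, hence the last summand of $\Lambda_G(A)$ is $\ell_p(G)$; factoring out $\F{G}$ lowers $h(G)$ by exactly one and $\Lambda_G(A)$ by at least one (since $\ell_p(G/\F{G})=\ell_p(G)-1$), without increasing $\Lambda_G(B)$, and induction concludes. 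If you wish to salvage your route, the missing ingredient is a layer-by-layer refinement of Lemma~\ref{lemma34}(2) relative to the series $\{\RR{i}{A}\}_i$; that would be a substantive new lemma, not a consequence of the results in Section~\ref{section3}.
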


\begin{proof}
We argue by induction on $|G|$. Suppose that $G$ contains two distinct minimal normal subgroups $N$ and $M$. Clearly, $h(G/N)=h(G)=h(G/M)$ and hence by induction $h(G)\leq \Lambda_{G/N}(AN/N)+\Lambda_{G/N}(BN/N)\leq \Lambda_G(A)+\Lambda_G(B)$. In particular, we may assume that $G$ contains a unique minimal normal subgroup $N$ and, replacing $A$ by $B$ if necessary, that $\{p\}=\pi(N)\subseteq \pi(A)$.  This yields $\F G=\O p G$. As $\cent G{\O p G}\leq \O p G$ and $\O p G\leq A$, we have $\F A=\O  p A$.

Write $h:=h(A)$. Now, $\RR{h-1}A\leq \FF 1 A=\F A$ and hence $\RR {h-1}A$ is a $p$-group. Thus
$$\Lambda_G(A)=\ell_p(G)+\sum_{i=1}^{h-1}\ell^i(G,A).$$
Since $\ell_p(G/\F G)=\ell_p(G)-1$, we get $\Lambda_{G/\F G}(A/\F G)\leq \Lambda_G(A)-1$. Moreover, since $p\notin \pi(B)$, we have $\Lambda_{G/\F G}(B\F G/\F G)=\Lambda_{G}(B)$. Therefore the inductive hypothesis gives
\begin{eqnarray*}
h(G)&=&h(G/\F G)+1\\
&\leq& \Lambda_{G/\F G}(A\F G/\F G)+\Lambda_{G/\F G}(B\F G/\F G)+1\\
&\leq& \Lambda_G(A)+\Lambda_G(B),
\end{eqnarray*}
and the proof is complete.
\end{proof}

\begin{proof}[Proof of Theorem~$\ref{thrmC}$]
For each $p\in \pi(A)$, Theorem~\ref{thrm2.2} yields $\ell_p(G)\leq d(G_p)$ and hence $\ell_p(G)\leq \delta(A)$. It follows that $\Lambda_G(A)\leq \delta(A)h(A)$. The same argument applied to $B$ gives $\Lambda_G(B)\leq \delta(B)h(B)$. Now the proof follows from Lemma~\ref{thrm48}.
\end{proof}

A weaker form of Theorem~\ref{thrmC} can be deduced from the results in Section~\ref{section4}. Indeed, from the first inequality in Lemma~\ref{lemma41} and from Lemma~\ref{lemma4.4}, we get
\begin{eqnarray*}
h(G)&\leq &h(A)+h(B)+\ell_{\sigma}(G)+\ell_{\sigma'}(G)-2\\
&\leq& h(A)+h(B)+\delta(A)h(A)+\delta(B)h(B)-2\\
&=&(\delta(A)+1)h(A)+(\delta(B)+1)h(B)-2.
\end{eqnarray*}
Clearly  Theorem~\ref{thrmC} always offer a better estimate on $h(G)$.

\thebibliography{10}
\bibitem{AFD}B.~Amberg, S.~Franciosi, F.~de Giovanni, \textit{Products of groups}, Oxford Mathematical Monographs, Clarendon Press, Oxford, 1992.

\bibitem{BB}A.~Ballester-Bolinches, R.~Esteban-Romero, M.~Asaad, \textit{Products of finite groups}, Expositions in Mathematics 53, Walter de Gruyter, Berlin, 2010.

\bibitem{Br}E.~G.~Brjuhanova,
The relation between $2$-length and derived length of a Sylow $2$-subgroup of a finite soluble group. (Russian),
\textit{Mat. Zametki} \textbf{29} (1981),  161--170, 316.

\bibitem{Dade} E.~C.~Dade, Carter subgroups and Fitting heights of finite solvable groups, \textit{Illinois J. Math.} \textbf{13} (1969), 449--514.

\bibitem{HH}P.~Hall, G.~Higman, On the $p$-length of $p$-soluble groups and reduction theorems for Burnside's problem, \textit{Proc. London Math. Soc. }\textbf{6} (1956), 1--42.

\bibitem{Ka}L.~S.~Kazarin, Soluble products of groups, \textit{Infinite groups 1994}, Proceedings of the International Conference held in Ravello, May 23--27 1994, (F.~de Giovanni, and M.~L.~Newell eds.), Walter de Gruyter, Berlin, 1996, 111--123. 

\bibitem{Gemma}G.~Parmeggiani, The Fitting series of the product of two finite nilpotent groups, \textit{Rend. Sem. Mat. Univ. Padova} \textbf{91} (1994), 273--278.

\bibitem{Rob}D.~J.~S.~Robinson, \textit{A Course in the Theory of Groups}, Springer-Verlag, New York, 1982. 

\bibitem{Tu}A.~Turull, Fitting height of groups and of fixed points, \textit{J. Algebra} \textbf{86} (1984), 555--566.

\end{document}